\newtheorem{proposition}[equation]{Proposition}
\newtheorem{theorem}[equation]{Theorem}
\newtheorem{corollary}[equation]{Corollary}
\newtheorem{lemma}[equation]{Lemma}
\theoremstyle{definition}
\newtheorem{definition}[equation]{Definition}
\newtheorem{remark}[equation]{Remark}
\numberwithin{equation}{section}
\newcommand{\srf}[1]{\mbox{${\text{\it SR}^F}$}}
\def\geq{\geqslant}
\def\leq{\leqslant}
\begin{document}
\bibliographystyle{plain} \title[Spectral triples and Gibbs measures]
{Spectral triples and Gibbs measures for expanding maps on Cantor sets} 

\author{Richard Sharp} \address{School of Mathematics,
  University of Manchester, Oxford Road, Manchester M13 9PL, England}
\email{sharp@maths.man.ac.uk}


\keywords{spectral triple, Dixmier trace, expanding map, Gibbs measure, Cantor set}


\begin{abstract}
  Let $T : \Lambda \to \Lambda$ be an expanding map on a Cantor set.
For each suitably normalized H\"older continuous potential, we construct a spectral triple from which
one may recover the associated Gibbs measure as a noncommutative measure.
\end{abstract}

\maketitle

%
%
%
%
%
%
%
%
%

\section{Introduction}\label{in}



In the 1990s, Connes introduced the concept of a {\it spectral triple}
as a fundamental object in noncommutative geometry \cite{Co1,Co2,Co3, Var},
giving a ``space-free'' description of many geometric phenonmena.
The notion is very flexible and appropriate choices allow one to
recover
the volume measure and metric on a Riemannian spin
manifold \cite{Co1,Co2} and also,
for example, the Hausdorff measure on certain fractal sets \cite{Co1,
GI1, GI2, GI3}. 
In the fractal case, 
the starting point is Connes's construction of a spectral triple for a
Cantor subset of the real line, from which (using ideas of Lapidus
and Pomerance \cite{LP}) the Minkowski content may be recovered.
For simple self-similar sets,  
the Hausdorff measure may
also be obtained and Guido and Isola generalize these ideas to certain
fractal subsets
of $\mathbb R^n$.
(See \cite{CI, CS} for other approaches and \cite{Palm} for a more
general construction valid for any compact metric space.) Furthermore,
Falconer and Samuel \cite{FS} have modified this construction
to describe multifractal phenomena.

The purpose of this paper is to show that, for a class of expanding
maps, certain important measures,
called Gibbs measures,
which arise in the ergodic theory of 
hyperbolic dynamical systems, may be obtained as noncommutative
measures
from an appropriate spectral triple. 
Specifically, our dynamical systems will be
expanding maps conjugate to a subshift of finite type (not necessarily
a full shift), so that,
in particular, the (maximal) invariant set is a
Cantor set. After this paper was written, we learned that Samuel
had obtained a very similar result in his thesis \cite{Sam1}. We will
give a little more detail on his work following Theorem
\ref{noncommmeas}
below.

We shall we shall now fix some notation.
Let $\Lambda$ be a compact subset of a smooth Riemannian manifold
$M$ and let $T : \Lambda \to \Lambda$ be a  $C^1$ 
expanding map which is  topologically conjugate to 
a mixing one-sided subshift of finite type $\sigma : \Sigma_A^+ \to
\Sigma_A^+$.
(See section 3 below for precise definitions.)
In particular, $\Lambda$ is a Cantor set. 
The purpose of this
condition
is to ensure that so-called ``locally constant'' functions on 
$\Lambda$ are contained
in $C(\Lambda,\mathbb C)$.

Let $\mathcal M_T$ denote the set of $T$-invariant probability
measures on $\Lambda$. This is a large set
but we may  single out the so-called {\it Gibbs measures}
(or equilibrium measures) 
associated to H\"older continuous potentials as being of particular
importance. 
These are defined as follows. Let $\psi : \Lambda \to \Lambda$ be
a H\"older continuous function. Then the Gibbs measure for $\psi$ is the
unique $\mu \in  \mathcal M_T$ for which
\begin{align*}
h_T(\mu) + \int \psi \, d\mu = \sup_{m \in \mathcal M_T}
\left(h_T(m) + \int \psi \, dm\right),
\end{align*}
where $h_T(m)$ denotes the entropy of $T$ with respect to $m$.
A general feature of hyperbolic dynamical systems is that averages
of weighted local local data (e.g. sums of observables over sets of
orbits)
give global information (e.g. the average of an observable with
respect to an invariant measure) \cite{Ba1, PP, Sh1}
and Gibbs measures may be obtained in this way. 
(Very roughly, weighting by the exponentials of sums of an observable 
$\psi$ gives the Gibbs measure
for $\psi$.) However, this
local
to global property also motivates the definition of a Dirac operator,
adapted from those in \cite{FS, GI1, GI2, GI3},
and allows us to obtain a noncommutative integral from its spectrum.
In Theorem \ref{noncommmeas} below, we show that this noncommutative
integral
agrees up to an explicit factor with the integral with respect
the Gibbs measure.
We begin by defining a spectral triple \cite{Co1, Var}.

\begin{definition} A {\it spectral triple} is a triple
$(H,A,D)$, where 
\begin{itemize}
\item[(i)]
$H$ is a Hilbert space;
\item[(ii)]
$A$ is a $C^*$-algebra equipped with a faithful
representation $\pi : A \to B(H)$ (the bounded linear operators
on $H$);
\item[(iii)]
$D$ 
is an essentially self-adjoint unbounded linear operator on $H$ 
with compact resolvent and such that
$\{f \in A \hbox{ : } \|[D,\pi(f)]\| < +\infty\}$ is
dense in $A$, where $[D,\pi(f)] : H \to H$ is the commutator operator
$[D,\pi(f)](\xi) = D\pi(f)(\xi) - \pi(f)D(\xi)$.
This $D$ is called a Dirac operator.
\end{itemize}
\end{definition}

We shall define spectral triples associated to H\"older
continuous potentials on
$\Lambda$, adapting the constructions of Connes \cite{Co1},
Guido and Isola \cite{GI1, GI2, GI3}) and Falconer and Samuel \cite{FS}.
As above, let $\sigma : \Sigma_A^+ \to \Sigma_A^+$ be the subshift of
finite type topologically conjugate to the expanding map
$T : \Lambda \to \Lambda$,
where the symbol set is $\{1,\ldots,k\}$
and $A$ is a zero-one transition matrix.
(See section 3 for a complete definition.)
We shall write $p : \Sigma_A^+ \to \Lambda$ for the conjugating 
homeomorphism. In the interests of readability, we will
systematically abuse notation by writing
$f(x)$ instead of $f(p(x))$, whenever $f \in C(\Lambda,\mathbb C)$ and
$x \in \Sigma_A^+$.

An ordered $n$-tuple
$(w_1,\ldots,w_n)$, with $w_m \in \{1,\ldots,k\}$, $m=1,\ldots,n$,
is called an {\it allowed word} of length $n$ 
if $A(w_m,w_{m+1})=1$ for $m=1,\ldots, n-1$.
Let $W_n$ denote the set of allowed words of length $n$ and let
\begin{align*}
W^* = \bigcup_{n=1}^\infty W_n.
\end{align*}
For $w = (w_1,\ldots,w_n) \in W_n$, we write
\begin{align*}
[w] = \{x = (x_n)_{n=1}^\infty \in \Sigma_A^+ \hbox{ : } x_m =w_m, \
m=1,\ldots,n\}
\end{align*} 
and 
$\mathfrak t(w) = w_n$.
For $w \in W_n$ and $x \in \Sigma_A^+$ then $wx$ will denote the
sequence defined by
\begin{align*}
(wx)_m = \begin{cases}
w_m &\hbox{ if }  1 \leq m \leq n \\
x_{m-n}
&\hbox{  if } m \geq n+1. \end{cases}
\end{align*}
Clearly, $wx \in \Sigma_A^+$ if and only if $A(\mathfrak t(w),x_1)=1$.
For each $j \in \{1,\ldots,k\}$, choose a sequence $x^{j} \in
\Sigma_A^+$
and distinct sequences
$y^{j},z^{j} \in \Sigma_A^+$ such that
$jx^{j},jy^{j},jz^{j} \in \Sigma_A^+$.

Now we can define a spectral triple associated to a continuous
potential $\phi : \Lambda \to \mathbb R$.
Our Hilbert space will be
\begin{align*}
H = \ell^2(W^*) \oplus \ell^2(W^*) \subset \bigoplus_{w \in W^*}
\mathbb C \oplus \mathbb C,
\end{align*}
where we write a typical element as
\begin{align*}
\xi =\bigoplus_{w \in W^*} \left(\begin{matrix} \xi_1(w) \\
    \xi_2(w) \end{matrix} \right)
\end{align*}
and our $C^*$-algebra
will be
$A = C(\Lambda,\mathbb C)$.
We define a $*$-representation $\pi : A \to B(H)$ by setting
$\pi(f)$ to be the multiplication operator
$$\pi(f)\left(\bigoplus_{w \in W^*}
\left(\begin{matrix} \xi_1(w) \\ \xi_2(w) \end{matrix} \right)\right)
=\bigoplus_{w \in W^*}
\left(\begin{matrix} f\left(wy^{\mathfrak t(w)}\right) \xi_1(w) \\ 
f\left(wz^{\mathfrak t(w)}\right)\xi_2(w) \end{matrix} \right),$$
We define $D_\phi : H \to H$ by
\begin{align*}
D_{\phi}\left(\bigoplus_{w \in W^*} \left(\begin{matrix} \xi_1(w) \\ \xi_2(w) \end{matrix} \right)\right)
&= \bigoplus_{n=1}^\infty\bigoplus_{w \in W_n}
e^{\phi^n(wx^{\mathfrak t(w)})} \left(\begin{matrix} 0 & 1 \\ 1 & 0 \end{matrix} \right)
\left(\begin{matrix} \xi_1(w) \\ \xi_2(w) \end{matrix} \right) \\
&= \bigoplus_{n=1}^\infty \bigoplus_{w \in W_n}
e^{\phi^n(wx^{\mathfrak t(w)})} \left(\begin{matrix} \xi_2(w) \\ \xi_1(w) \end{matrix}
\right),
\end{align*}
where
$\phi^n := \phi + \phi \circ T + \cdots + \phi \circ T^{n-1}$.
We have the following theorem.

\begin{theorem} \label{spectraltriple}
For any continuous function $\phi : \Lambda \to \mathbb R$,
$(H,A,D_\phi)$ is a spectral triple.
\end{theorem}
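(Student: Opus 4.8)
The plan is to verify conditions~(i)--(iii) of the definition directly. Condition~(i) is immediate. Throughout I would use the orthogonal decomposition $H=\bigoplus_{w\in W^*}\mathbb C^2$, with respect to which both operators are block-diagonal: $\pi(f)$ acts on the $w$-summand by the diagonal matrix $\operatorname{diag}\bigl(f(wy^{\mathfrak t(w)}),f(wz^{\mathfrak t(w)})\bigr)$, while for $w\in W_n$ the operator $D_\phi$ acts there by the Hermitian matrix $\lambda_w\bigl(\begin{smallmatrix}0&1\\1&0\end{smallmatrix}\bigr)$, where $\lambda_w:=e^{\phi^n(wx^{\mathfrak t(w)})}>0$.

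For~(ii): that $\pi$ is a unital $*$-homomorphism with $\|\pi(f)\|\le\|f\|_\infty$ is manifest, since it is given by diagonal multiplication operators. Faithfulness rests on the observation that $\{wy^{\mathfrak t(w)}:w\in W^*\}$ is dense in $\Sigma_A^+$: given $x\in\Sigma_A^+$ and $n\ge1$, the word $w=(x_1,\dots,x_n)$ lies in $W_n$, the choice of the $y^j$ guarantees $A(\mathfrak t(w),y^{\mathfrak t(w)}_1)=1$ so that $wy^{\mathfrak t(w)}\in\Sigma_A^+$, and $wy^{\mathfrak t(w)}$ agrees with $x$ in its first $n$ coordinates. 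Hence $\pi(f)=0$ forces $f\circ p$ to vanish on a dense subset of $\Sigma_A^+$, so $f=0$.

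For~(iii) I would regard $D_\phi$ as initially defined on the algebraic direct sum of the $\mathbb C^2$ summands, where it is symmetric. Since $\pm i$ lie in the resolvent set of each Hermitian block $\lambda_w\bigl(\begin{smallmatrix}0&1\\1&0\end{smallmatrix}\bigr)$, any vector in $\ker(D_\phi^*\mp i)$ vanishes in every summand; so the deficiency indices are $(0,0)$, $D_\phi$ is essentially self-adjoint, and its closure is the self-adjoint operator $\bigoplus_w\lambda_w\bigl(\begin{smallmatrix}0&1\\1&0\end{smallmatrix}\bigr)$ with pure point spectrum $\{\pm\lambda_w:w\in W^*\}$. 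The resolvent is then compact provided $\lambda_w\to\infty$ as $|w|\to\infty$, i.e.\ $\phi^n(wx^{\mathfrak t(w)})\to+\infty$: granting this, only finitely many $\lambda_w$ lie below any fixed bound, so $(D_\phi\pm i)^{-1}$ is a norm-limit of finite-rank operators and hence compact. This is the one step that is not a purely formal consequence of the block structure, and it is where the nature of $\phi$ is used — for instance $\inf_\Lambda\phi>0$ already suffices, since then $\phi^n\ge n\inf_\Lambda\phi$ — so I would regard it as the crux.

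It remains to produce a dense set of $f\in A$ with $\|[D_\phi,\pi(f)]\|<\infty$. As $D_\phi$ and $\pi(f)$ are block-diagonal, so is their commutator, which on the $w$-summand equals $\lambda_w\bigl(f(wy^{\mathfrak t(w)})-f(wz^{\mathfrak t(w)})\bigr)\bigl(\begin{smallmatrix}0&-1\\1&0\end{smallmatrix}\bigr)$, of norm $\lambda_w\,|f(wy^{\mathfrak t(w)})-f(wz^{\mathfrak t(w)})|$. The key point is that if $f$ is locally constant, depending only on the first $N$ coordinates, then for every $w$ with $|w|\ge N$ the sequences $wy^{\mathfrak t(w)}$ and $wz^{\mathfrak t(w)}$ agree in their first $N$ coordinates, so the corresponding block is zero; thus $[D_\phi,\pi(f)]$ is supported on the finitely many summands with $|w|<N$ and is in particular bounded. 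Since the locally constant functions lie in $C(\Lambda,\mathbb C)$ (by the standing hypothesis on $T$) and are dense there by Stone--Weierstrass, the set $\{f:\|[D_\phi,\pi(f)]\|<\infty\}$ is dense in $A$, completing the verification. Note that the bounded-distortion estimates needed later for the computation of the noncommutative integral play no role in this argument.
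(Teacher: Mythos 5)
Your argument follows the same route as the paper's proof: faithfulness of $\pi$ via density of the set $\{wy^{\mathfrak t(w)} : w \in W^*\}$ in $\Sigma_A^+$, the block computation of $[D_\phi,\pi(f)]$, and boundedness of the commutator for locally constant $f$ because the $w$-block $\lambda_w\bigl(f(wy^{\mathfrak t(w)})-f(wz^{\mathfrak t(w)})\bigr)\bigl(\begin{smallmatrix}0&-1\\1&0\end{smallmatrix}\bigr)$ vanishes once $|w|$ exceeds the depth of $f$. Your deficiency-index argument for essential self-adjointness is a welcome elaboration of the paper's bare assertion that ``$D_\phi$ is self-adjoint,'' though for a block-diagonal Hermitian operator this is not the delicate point.

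The place where you part company with the paper, and where I think you have put your finger on a real issue, is compactness of the resolvent. The paper simply asserts that $D_\phi^{-1}$ ``is a compact operator,'' whereas you note correctly that this needs $\lambda_w = e^{\phi^{|w|}(wx^{\mathfrak t(w)})}\to\infty$, which does not follow from mere continuity of $\phi$: for $\phi\equiv 0$ the spectrum of $D_\phi$ is $\{\pm 1\}$ with infinite multiplicity and the resolvent is certainly not compact, and for $\phi\equiv -c$ with $c>0$ the point $0$ lies in the essential spectrum. So, as stated for ``any continuous $\phi$,'' the theorem is not quite right, and the paper's proof contains the same gap you flagged. Your sufficient condition $\inf_\Lambda\phi>0$ repairs it, but you could just as well have observed that the condition actually used later (that $-\phi$ is normalized, i.e.\ $L_{-\phi}1=1$) combined with topological mixing gives $\inf_\Lambda\phi^N>0$ for some $N$ — since $A^N>0$ forces every point to have at least two $T^N$-preimages — whence $\phi^n\to+\infty$ uniformly and the resolvent is compact. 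In short: your proposal is the paper's argument with an honest flag on the one step that the paper skips, and it would be strengthened by adding that the hypothesis under which the theorem is actually applied does guarantee this step.
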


The main result of the paper is that, when $\phi$ is H\"older
continuous and is suitably normalized, 
we may recover the Gibbs measure for $-\phi$ from the operators
$\pi(f)|D_\phi|^{-1}$ via a singular trace.
(The choice of sign is for notational convenience.)
In the next section, we introduce the ideas needed to explain 
this statement and then state our main theorem.
In section 3, we discuss some material on expanding maps,
subshifts of finite type and transfer operators.
In section 4, we prove Theorem \ref{spectraltriple}.
In section 5, we complete the paper by proving our result on
noncommutative
measures and Gibbs measures, Theorem \ref{noncommmeas}.

\section{Singular traces and noncommutative measures}

In order to state our main result, 
we need to briefly discuss the theory of singular traces
of compact operators. For more details, see \cite{AGPS} or \cite{GI2}.
Let $B(H)$ denote the algebra of bounded linear operators on a Hilbert space 
$H$ and let $K(H)$ denote the ideal of compact operators.
A {\it singular trace} on a two-sided ideal $I \subset K(H)$ is a 
positive linear functional $\tau : I \to \mathbb R$ such that
$\tau$ is unitary invariant (the trace property) and
vanishes on finite rank operators.

The most important singular traces are the so-called {\it Dixmier
traces} \cite{Di}. These are defined on an ideal $I = \mathcal
L^{1,\infty}(H)$,
the {\it Dixmier ideal}, given by
\begin{align*}
\mathcal L^{1,\infty}(H) = \left\{A \in K(H) \hbox{ : }
\limsup_{n \to +\infty} \frac{1}{\log n} \sum_{k=1}^n a_k
<+\infty\right\},
\end{align*}
where $\{a_n\}_{n=1}^\infty$  denote the eigenvalues of
$|A| := \sqrt{A^*A}$, written in decreasing order.
Then
 a Dixmier trace is a singular trace $\tau_\omega$ on $\mathcal
L^{1,\infty}(H)$ defined, for a positive operator 
$A$, by
\begin{align*}
\tau_\omega(A) = \omega\text{-}\lim 
\frac{1}{\log n} \sum_{k=1}^n a_k,
\end{align*}
where this is a generalized limit corresponding to
a state $\omega$ on $l^\infty$, and extended to $\mathcal
L^{1,\infty}(H)$
by linearity. 
If the limit 
\begin{align*}
\lim_{n \to +\infty} 
\frac{1}{\log n} \sum_{k=1}^n a_k
\end{align*}
 exists then we say that $A$
is {\it measurable} and call the value of the limit the {\it noncommutative
  integral} of $A$.
(There are more general definitions
of the Dixmier trace 
--
see, for example, Chapter IV, \S2.$\beta$ of \cite{Co1},
\cite{LSS} or Chapter 5 of \cite{Var}. Correspondingly, there are more
general definitions of  measurability. It is shown in \cite{LSS} that
these are equivalent to the definition given here.)

Consider the spectral triple $(H,A,D_\phi)$ defined in the
previous section.
We will now suppose that $\phi : \Lambda \to \mathbb R$ is H\"older
continuous. 
We say that
$-\phi$ is {\it normalized} if 
\begin{align*}
\sum_{Ty=x} e^{-\phi(y)} =1,
\end{align*}
for all $x \in \Lambda$. (As we shall see in section 3,
any real-valued H\"older continuous function may be
normalized by adding a constant and a function of the form
$u \circ T -u$, with $u \in C(\Lambda,\mathbb R)$, and this operation
does not change
the Gibbs measure.)
Then, for $f \in C(\Lambda,\mathbb C)$, the operator
$\pi(f) |D_\phi|^{-1}$ is given by the formula
\begin{align*}
\pi(f)|D_\phi|^{-1}\left(\bigoplus_{w \in W^*} 
\left(\begin{matrix} \xi_1(w) \\ \xi_2(w) \end{matrix} \right)\right)
= \bigoplus_{n=1}^\infty \bigoplus_{w \in W_n} e^{-\phi^n(wx^{\mathfrak t(w)})}
\left(\begin{matrix} f\left(wy^{\mathfrak t(w)}\right) \xi_1(w) \\ 
f\left(wz^{\mathfrak t(w)}\right)\xi_2(w) \end{matrix} \right).
\end{align*}

\begin{theorem} \label{noncommmeas}
Suppose that $\phi : \Lambda \to \mathbb R$ is a H\"older continuous 
function and that $-\phi$ is normalized. Then, for any $f \in
C(\Lambda,\mathbb C)$,
\begin{itemize}
\item [(i)]
$\pi(f) |D_\phi|^{-1} \in \mathcal L^{1,\infty}(H)$; 
\item [(ii)] 
$\pi(f) |D_\phi|^{-1}$ is measurable and
\begin{align*}
\tau_\omega(\pi(f) |D_\phi|^{-1}) = c_\phi \int f \, d\mu,
\end{align*}
where $\mu$ is the Gibbs measure for $-\phi$ and where
\begin{align*}
c_\phi
= \frac{2}{\int \phi \, d\mu} \sum_{j=1}^k \sum_{Tx =x^j}
e^{-\phi(x)}
\chi_j(x),
\end{align*}
with $\chi_j$ the indicator function of the set $p([j])$.
\end{itemize}
\end{theorem}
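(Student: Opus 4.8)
The key observation is that $\pi(f)\,|D_\phi|^{-1}$ is a diagonal operator: since $|D_\phi|$ multiplies the summand of $H$ indexed by $w\in W_n$ by $e^{\phi^n(wx^{\mathfrak t(w)})}$, the operator $\pi(f)\,|D_\phi|^{-1}$ has diagonal entries $e^{-\phi^n(wx^{\mathfrak t(w)})}f(wy^{\mathfrak t(w)})$ and $e^{-\phi^n(wx^{\mathfrak t(w)})}f(wz^{\mathfrak t(w)})$ on the two coordinates over each $w$; in particular $\bigl|\pi(f)\,|D_\phi|^{-1}\bigr|=\pi(|f|)\,|D_\phi|^{-1}$, and for $f\ge 0$ the operator is non-negative. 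Three facts from section~3 drive everything. First, bounded distortion and the Gibbs property $e^{-\phi^n(w\cdot)}\asymp\mu([w])$ uniformly on $W_n$, with $\sup_{w\in W_n}\mu([w])$ decaying exponentially, so that $\mu$ has positive entropy and, since $-\phi$ is normalized, $\int\phi\,d\mu=h_T(\mu)>0$. Second, the Ruelle--Perron--Frobenius theorem for the transfer operator $L_{-s\phi}$ of the H\"older potential $-s\phi$. Third, the bookkeeping identity that, for $w\in W_n$ with $\mathfrak t(w)=j$, one has $\phi^n(wx^j)=\phi(jx^j)+\phi^{n-1}(wx^j)$ and $\{wx^j:\mathfrak t(w)=j\}$ is exactly the set of $\sigma^{n-1}$-preimages of $jx^j$ (recall $jx^j\in\Sigma_A^+$), so that any sum over $w\in W_n$ with $\mathfrak t(w)=j$ becomes an $(n{-}1)$-fold application of $L_{-s\phi}$ based at $jx^j$.

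For (i) the plan is to estimate the eigenvalue counting function $N(\varepsilon)=\#\{(w,i):a_{w,i}>\varepsilon\}$ of the modulus. Since $a_{w,i}\le\|f\|_\infty e^{-\phi^{|w|}(w\cdot)}\le C\|f\|_\infty\,\mu([w])$, it suffices to bound $\#\{w\in W^*:\mu([w])>\delta\}$. This set is a subtree of the cylinder tree, closed under passing to prefixes; its maximal words have pairwise disjoint cylinders and hence number at most $1/\delta$, and since the subshift is mixing on a finite alphabet it contains no arbitrarily long "forced" (out-degree one) chains, so the whole subtree has size $O(1/\delta)$. Thus $N(\varepsilon)=O(1/\varepsilon)$, which is the defining condition for $\pi(f)\,|D_\phi|^{-1}\in\mathcal L^{1,\infty}(H)$. (Alternatively, (i) drops out of the Tauberian theorem once the zeta function below is seen to converge for all real $s>1$.)

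For (ii), assume first $f\ge 0$ and study the zeta function $\zeta_f(s)=\operatorname{Tr}\bigl((\pi(f)\,|D_\phi|^{-1})^{s}\bigr)=\sum_{n\ge1}\sum_{w\in W_n}e^{-s\phi^n(wx^{\mathfrak t(w)})}\bigl(f(wy^{\mathfrak t(w)})^{s}+f(wz^{\mathfrak t(w)})^{s}\bigr)$. Regrouping by $\mathfrak t(w)=j$ and using the identification above, the inner sum equals $e^{-s\phi(jx^j)}\bigl(L_{-s\phi}^{\,n-1}F_{n,s}\bigr)(jx^j)$, where $F_{n,s}$ is the bounded function with $F_{n,s}(wx^j)=f(wy^j)^{s}+f(wz^j)^{s}$. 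By the Ruelle--Perron--Frobenius theorem, $L_{-s\phi}^{\,m}=\beta(s)^{m}\,h_s\otimes\nu_s+R_s^{\,m}$ with $\|R_s^{\,m}\|\le C\gamma(s)^{m}$, $\gamma(s)<\beta(s)$, all data analytic near $s=1$, $\beta(1)=1$, $h_1\equiv1$, $\nu_1=\mu$, and $\beta'(1)=-\int\phi\,d\mu<0$ by Ruelle's derivative formula (nonzero by the first paragraph), so also $\beta(s)<1$ for all real $s>1$. Since $F_{n,s}$ agrees with $2f^{s}$ to within a uniform $o(1)$ on the relevant preimage sets as $n\to\infty$, while $\|F_{n,s}\|_\infty$ stays bounded, one gets $\bigl(L_{-s\phi}^{\,n-1}F_{n,s}\bigr)(jx^j)=2\beta(s)^{n-1}h_s(jx^j)\nu_s(f^{s})+o(\beta(s)^{n-1})+O(\gamma(s)^{n-1})$, uniformly for $s$ near $1$; summing the geometric series over $n$ and $j$ gives $\zeta_f(s)=\dfrac{2\,\nu_s(f^{s})\sum_{j} e^{-s\phi(jx^j)}h_s(jx^j)}{1-\beta(s)}+o\!\bigl(\tfrac1{s-1}\bigr)$ as $s\to1^{+}$ (the $o$-term controlled by Abel summation). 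Letting $s\to1^{+}$, using $\nu_s(f^{s})\to\int f\,d\mu$, $h_s\to1$, $(s-1)/(1-\beta(s))\to 1/\!\int\phi\,d\mu$, and the fact that $\sum_{j}\sum_{Tx=x^j}e^{-\phi(x)}\chi_j(x)=\sum_{j}e^{-\phi(jx^j)}$ (the conditions $Tx=x^j$ and $\chi_j(x)=1$ pin $x$ to $jx^j$),
\[
\lim_{s\to1^{+}}(s-1)\,\zeta_f(s)=\frac{2\sum_{j}e^{-\phi(jx^j)}}{\int\phi\,d\mu}\int f\,d\mu=c_\phi\int f\,d\mu .
\]
The Tauberian theorem for the Dixmier trace (\cite{Co1},\cite{LSS}) then gives that $\pi(f)\,|D_\phi|^{-1}$ is measurable with $\tau_\omega(\pi(f)\,|D_\phi|^{-1})=c_\phi\int f\,d\mu$. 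For general $f\in C(\Lambda,\mathbb C)$ one writes $f$ as a complex linear combination of four non-negative continuous functions and uses that $\mathcal L^{1,\infty}(H)$ is a linear space, that $\tau_\omega$ is linear, and that $\bigl|\pi(f)\,|D_\phi|^{-1}\bigr|=\pi(|f|)\,|D_\phi|^{-1}$.

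I expect the main obstacle to be making the Ruelle--Perron--Frobenius step sufficiently uniform: the remainder in $(L_{-s\phi}^{\,n-1}F_{n,s})(jx^j)$ must be $o(\beta(s)^{n-1})$ with the implied bound uniform for $s$ in a neighbourhood of $1$, so that the $o(1/(s-1))$ term is genuinely negligible after multiplication by $s-1$. This rests on the spectral gap for $L_{-s\phi}$, on the continuous dependence of $\beta(s),h_s,\nu_s$ on $s$, and on the non-degeneracy $\int\phi\,d\mu\ne 0$ — all standard consequences of the estimates recalled in section~3, but they are where the real work of the proof lies.
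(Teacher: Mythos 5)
Your proposal is correct and follows essentially the same route as the paper: pass to the zeta function $\zeta_f$, apply Hardy--Littlewood/Tauberian to reduce measurability and the Dixmier trace to the residue of $\zeta_f$ at $s=1$, and compute that residue via the Ruelle--Perron--Frobenius theorem for $L_{-s\phi}$, with the constant coming from $\beta'(1)=-\int\phi\,d\mu$ and the single preimage $jx^j$; the extension from H\"older $f\geq 0$ to continuous $f$ by sandwiching is also the paper's argument. The one organizational difference is in how the mismatch between the evaluation points $wy^{\mathfrak t(w)},wz^{\mathfrak t(w)}$ (where $f$ is sampled) and $wx^{\mathfrak t(w)}$ (where $\phi^n$ is sampled) is controlled: you fold it into the function $F_{n,s}$ inside $L_{-s\phi}^{n-1}$, which then forces you to make the remainder in the RPF expansion uniform for a non-H\"older, $n$-dependent argument --- exactly the issue you flag at the end. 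The paper avoids this by first defining the auxiliary series $\eta_f(s)$ with $f$ also evaluated at $wx^{\mathfrak t(w)}$, so that the transfer-operator identity is exact and the RPF machinery applies cleanly to a fixed H\"older function, and then proving separately (Lemma \ref{relatezetaeta}) that $\zeta_{f,y},\zeta_{f,z},\eta_f$ all have the same residue, using only positivity, continuity of $f$, and the fact that $d(wx^{\mathfrak t(w)},wy^{\mathfrak t(w)})\to 0$ with $|w|$, plus a bound of the error by $\epsilon\,\eta_1(t)$. Your compressed version can be made rigorous the same way (compare $L_{-s\phi}^{n-1}F_{n,s}$ with $L_{-s\phi}^{n-1}(2f^s)$ via $|L_{-s\phi}^{n-1}(F_{n,s}-2f^s)|\leq\|F_{n,s}-2f^s\|_\infty\,L_{-s\phi}^{n-1}1$), so there is no gap, but the paper's two-step split is cleaner. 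Also note that your direct counting argument for (i) is not needed and is looser than the alternative you mention (convergence of $\zeta_f(s)$ for $s>1$ plus divergence at $s=1$), which is what the paper uses.
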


\begin{remark}
As we noted in the introduction, a result similar to Theorem
\ref{noncommmeas}
has been obtained by Samuel \cite{Sam1}. A significant difference is
that he requires the potential $\phi$ to be non-arithmetic, i.e.,
that the sums of $\phi$ around periodic orbits do not all lie in a
single discrete
subgroup of $\mathbb R$. This restriction is needed for the
renewal
theory approach he uses. Thus, for example, his results do not cover
the measure of maximal entropy. A particularly attractive feature of
his work is that he is able to explicity calculate the
Dixmier trace associated to the constant function $1$, i.e.
$\tau_\omega(|D_\phi|^{-1})$, is equal to
 the reciprocal of the entropy of $\mu$ and he identifies this as a
 noncommutative volume.
\end{remark}

\section{Expanding maps and subshifts of finite type}\label{expand}


We begin the section by defining subshifts of finite type.
Let $A$ be a $k \times k$ matrix whose
entries are all either zero or one. We define the (one-sided)
shift space
$$\Sigma_A^+ = \left\{(x_n)_{n=1}^\infty \in \prod_{n=1}^\infty
\{1,\ldots,k\}
\hbox{ : } A(x_n,x_{n+1})=1 \ \text{ for all } n \geq 1\right\}$$
and the (one-sided) subshift of finite type
$\sigma : \Sigma_A^+ \to \Sigma_A^+$
by $(\sigma x)_n =x_{n+1}$.
We give $\{1,\ldots,k\}$ the discrete topology, 
$\prod_{n=1}^\infty\{1,\ldots,k\}$
the product topology
and $\Sigma_A^+$ the subspace topology. A compatible metric
is given by
$$d((x_n)_{n=1}^\infty,(y_n)_{n=1}^\infty)
= \sum_{n=1}^\infty \frac{1-\delta_{x_n y_n}}{2^n},$$
where $\delta_{ij}$ is the Kronecker symbol.

We say that the matrix $A$ is irreducible if, for each
$(i,j)$, there exists $n(i,j) \geq 1$ such that
$A^{n(i,j)}(i,j)>0$ and aperiodic if there exists $n \geq 1$ such that,
for each $(i,j)$, $A^n(i,j)>0$.
The latter statement is equivalent to $\sigma : \Sigma_A^+ \to \Sigma_A^+$
being topologically mixing (i.e. that there exists $n \geq 1$
such that for any two non-empty open
sets $U,V \subset \Sigma_A^+$, $\sigma^{-m}(U) \cap V \neq \varnothing$,
for all $m \geq n$).

Let $M$ be a compact connected smooth Riemannian manifold and suppose that
$\Lambda \subset U \subset M$ with $\Lambda$ compact and $U$ open. 
Let $T : U \to M$ be a $C^1$ map. Suppose
that 
\begin{enumerate}
\item[(i)]
there exists $\lambda > 1$ such that $\|DT_x\| \geq
\lambda$ 
for all $x \in U$;
\item[(ii)]
$\Lambda = \bigcap_{n=0}^\infty T^{-n}U$;
\item[(iii)]
$T$ is topologically mixing.
\end{enumerate}
If $T$ satisfies (i), (ii) and (iii) then we refer to 
$T : \Lambda \to \Lambda$ as an expanding map and we
can find a mixing one-sided subshift of finite type $\sigma :
\Sigma_A^+ \to \Sigma_A^+$ and a H\"older continuous map
$p : \Sigma_A^+
\to \Lambda$ which semi-conjugates $T$ and $\sigma$. 
Furthermore, the map is ``nearly'' a homeomorphism. Here, however, we
impose the additional condition that $p$ is, in fact, a homeomorphism
and assume that
\begin{enumerate}
\item[(iv)]
$T : \Lambda \to \Lambda$ is topologically conjugate to
a mixing one-sided subshift of finite type $\sigma : \Sigma_A^+ \to
\Sigma_A^+$.
\end{enumerate}
In particular, (iv) implies that $\Lambda$ is a Cantor set.

Assumption (iv) gives $\Lambda$ a natural grading. In particular,
for each $n \geq 1$, we may write $\Lambda$ as a disjoint union
\begin{align*}
\Lambda = \bigcup_{w \in W_n} p([w]).
\end{align*}
We will say that a function  $f : \Lambda \to \mathbb C$ is 
{\it locally constant} if, for some $n \geq 1$, $f$ is constant on
each set $p([w])$, $w \in W_n$. We shall write
$\mathrm{LC}(\Lambda)$ for the set of all locally constant functions
on $\Lambda$. Clearly, $\mathrm{LC}(\Lambda)$ a uniformly dense
subalgebra of 
$C(\Lambda,\mathbb C)$.

We shall also consider some larger subalgebras of $C(\Lambda,\mathbb C)$.
For $\alpha >0$, we shall let $C^\alpha(\Lambda,\mathbb C)$
denote the space of $\alpha$-H\"older continuous functions on
$\Lambda$,
i.e., the set of functions $g : \Lambda \to \mathbb C$ satisfying
\begin{align*}
|g|_\alpha := \sup_{x \neq y} \frac{|g(x)-g(y)|}{d(x,y)^\alpha}
<+\infty.
\end{align*}
This is a Banach space with respect to the norm
$\|\cdot\|_\alpha = \|\cdot\|_\infty + |\cdot|_\alpha$.
Clearly, for any $\alpha>0$,
\begin{align*}
\mathrm{LC}(\Lambda) \subset C^\alpha(\Lambda,\mathbb C) \subset 
C(\Lambda,\mathbb C).
\end{align*}

\section{Gibbs states and transfer operators}\label{gibbs}

In this section we shall discuss some of the ergodic theory associate
to the map $T : \Lambda \to \Lambda$. 
The main references are \cite{Bo1} and \cite{PP}, where this theory is
developed for subshifts of finite type. The symbolic dynamics
described in the
preceding section allows the results to be immediately transfered
to expanding maps.
As above, we shall write
$\mathcal M_T$ for the space of $T$-invariant probability measures.
Given $m \in \mathcal M_T$, we write 
$h_T(m) \geq 0$ for the entropy of $T$ as a measure preserving
transformation
of $(\Lambda,m)$ (see \cite{Wa1} for the definition).
For a continuous function $\psi : \Lambda \to \mathbb R$, we define
its {\it pressure} $P(\psi)$ by
\begin{align*}
P(\psi) = 
\sup_{m \in \mathcal M_T}
\left(h_T(m) + \int \psi  \, dm\right).
\end{align*}
If $\psi$ is H\"older continuous, then there is a unique probability
measure $\mu$, called the {\it Gibbs measure} (or equilibrium measure)
for $\psi$, for which this
supremum is realized \cite{Ba1, Bo1, PP}.

Given $\psi \in C(\Lambda,\mathbb R)$, we define the {\it
transfer operator} $L_{\psi} : C(\Lambda,\mathbb C) \to
C(\Lambda,\mathbb C)$
by
\begin{align*}
L_\psi g(x) = \sum_{Ty=x} e^{\psi(y)} g(y).
\end{align*}
A key element of our approach will be to relate the eigenvalue
asymptotics of our Dirac operators to the spectral properties of
transfer operators. For this approach to work, we shall need 
to find a space of which $L_{\psi}$ acts quasi-compactly.

If $\psi \in C^\alpha(\Lambda,\mathbb R)$ then
$L_{\psi} : C^\alpha(\Lambda,\mathbb C) \to
C^\alpha(\Lambda,\mathbb C)$. The basic spectral properties of 
$L_{\psi}$ on this space are contained in the following result,
which is Ruelle's generalization of the classical Perron-Frobenius
Theorem for non-negative matrices).

\begin{proposition} \label{rpf} \cite{Ba1, Bo1, PP, Ru1}
If $\psi \in C^\alpha(\Lambda,\mathbb R)$ then 
$L_\psi : C^\alpha(\Lambda,\mathbb C) \to
C^\alpha(\Lambda,\mathbb C)$ has a simple eigenvalue equal to
$e^{P(\psi)}$ with the rest of the spectrum contained
in a disk $\{z \in \mathbb C \hbox{ : } |z| \leq \theta e^{P(\psi)}\}$,
for some $0 < \theta<1$. Furthermore, there exist
\begin{itemize}
\item[(i)] a
strictly positive eigenfunction $h \in C^\alpha(\Lambda,\mathbb R)$
such
that $L_\psi h=e^{P(\psi)}h$; and
\item[(ii)]
an eigenmeasure $\nu \in C(\Lambda,\mathbb R)^*$ such that 
$L_\psi^*\nu = e^{P(\psi)}\nu$.
\end{itemize} 
If $\nu$ is chosen to be a probability measure and
the eigenfunction 
$h$ is chosen so that $\int h \, d\nu =1$ then $\mu = h \nu$ is the 
Gibbs measure for $\psi$.
\end{proposition}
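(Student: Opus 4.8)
The plan is to follow the classical route to the Ruelle--Perron--Frobenius theorem (as in \cite{Bo1, PP, Ru1}), in four stages: produce an eigenmeasure, then a positive eigenfunction, then normalize and prove a spectral gap for the resulting Markov operator, and finally identify the eigenvalue with $e^{P(\psi)}$ and $h\nu$ with the Gibbs measure. The eigenmeasure comes from a fixed-point argument: $L_\psi^*$ acts on $C(\Lambda,\mathbb C)^*$, and the continuous self-map $m \mapsto L_\psi^*m/(L_\psi^*m)(1)$ of the weak-$*$ compact convex set of probability measures has a fixed point $\nu$ by Schauder--Tychonoff, giving $L_\psi^*\nu = \lambda\nu$ with $\lambda = (L_\psi 1)(\nu) > 0$. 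The analytic engine for everything that follows is the \emph{bounded distortion} estimate: for $w \in W_n$ and $x,y \in p([w])$ the points $T^j(wx)$, $T^j(wy)$ lie in a common cylinder of depth $n-j$, so $d(T^jwx,T^jwy) \leq C\lambda^{-(n-j)}$ and hence $|\psi^n(wx) - \psi^n(wy)| \leq C|\psi|_\alpha\sum_{j\geq 1}\lambda^{-j\alpha} < \infty$, uniformly in $n,w$ (and of size $O(d(x,y)^\alpha)$ when $x,y$ agree far into the future). Consequently $\lambda^{-n}L_\psi^n g$ has both $C^0$-norm and $\alpha$-H\"older seminorm bounded in terms of $\|g\|_\alpha$, uniformly in $n$.

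Apply this to the Ces\`aro averages $h_N = N^{-1}\sum_{n=0}^{N-1}\lambda^{-n}L_\psi^n 1$: they are uniformly bounded in $C^\alpha$ and uniformly bounded below (distortion forces $\lambda^{-n}L_\psi^n 1 \geq c_1 > 0$), so Arzel\`a--Ascoli gives a $C^0$-limit $h \geq c_1$ along a subsequence; the eigenmeasure relation $L_\psi^*\nu = \lambda\nu$ forces $\lambda^{-1}L_\psi h = h$, and the uniform H\"older bounds pass to the limit, so $h \in C^\alpha$ is a strictly positive eigenfunction, which we rescale so that $\int h\, d\nu = 1$. Passing to the normalized operator $\widehat L g := \lambda^{-1}h^{-1}L_\psi(hg)$ we get $\widehat L 1 = 1$ and $\widehat L^*\mu = \mu$ for the probability measure $\mu := h\nu$; since multiplication by $h^{\pm 1}$ is a bounded automorphism of $C^\alpha$, $\widehat L$ has the same spectrum on $C^\alpha$ as $\lambda^{-1}L_\psi$, so it suffices to analyse $\widehat L$.

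The heart of the proof is the spectral gap for $\widehat L$. The distortion estimate yields a Lasota--Yorke (Doeblin--Fortet) inequality $|\widehat L^n g|_\alpha \leq A\theta_0^n|g|_\alpha + B\|g\|_\infty$ for some $\theta_0 \in (0,1)$, while $\|\widehat L^n g\|_\infty \leq \|g\|_\infty$ and $C^\alpha \hookrightarrow C^0$ is compact; the Ionescu-Tulcea--Marinescu theorem then makes $\widehat L$ quasi-compact on $C^\alpha$ with essential spectral radius $\leq \theta_0$ and spectral radius $1$. For the peripheral spectrum, if $\widehat L g = e^{it}g$ with $g \in C^\alpha$ nonzero, then $|g| \leq \widehat L|g|$ and $\int(\widehat L|g| - |g|)\,d\mu = 0$ force $\widehat L|g| = |g|$; topological mixing of $\sigma$ then forces $|g|$ constant, and the usual argument on $|g|\equiv 1$ forces $e^{it} = 1$ and $g$ constant. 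Hence $\mathrm{spec}(\widehat L) \subset \{1\}\cup\{|z|\leq\theta\}$ with $1$ a simple eigenvalue and $\theta < 1$, which rescales to the asserted picture for $L_\psi$ with simple eigenvalue $\lambda$ and remaining spectrum in $\{|z| \leq \theta\lambda\}$.

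It remains to identify $\lambda = e^{P(\psi)}$ and $\mu = h\nu$ as the Gibbs measure. The gap gives $\widehat L^n g \to (\int g\, d\mu)\,1$ uniformly and exponentially for $g\in C^\alpha$; with $g = h^{-1}$ this yields $\tfrac1n\log L_\psi^n 1(x) \to \log\lambda$, and a standard computation with $g = \mathbf 1_{p([w])}$, using bounded distortion, yields two-sided Gibbs bounds $c_1 \leq \mu(p([w]))\,e^{n\log\lambda - \psi^n(wx)} \leq c_2$ uniformly in $w\in W_n$, $x\in p([w])$. The one genuinely external input is the equality $\lambda = e^{P(\psi)}$: since $P(\psi)$ is defined here as a supremum over $\mathcal M_T$, one invokes the variational principle in the form $P(\psi) = \lim_n\tfrac1n\log\|L_\psi^n 1\|_\infty = \log\lambda$ (\cite{Ba1, Bo1, PP}), after which the uniqueness of the measure satisfying the Gibbs bounds (Bowen, \cite{Bo1}) identifies $\mu$ as the Gibbs measure for $\psi$. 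The main obstacle is the spectral-gap step: producing a Lasota--Yorke inequality with a true contraction factor and controlling the peripheral spectrum via mixing; a secondary subtlety is routing $\lambda = e^{P(\psi)}$ through the variational principle rather than taking $\log\lambda$ as the definition of pressure.
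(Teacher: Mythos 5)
The paper does not prove this proposition at all: it is quoted as a classical result (Ruelle's Perron--Frobenius theorem) with references to \cite{Ba1, Bo1, PP, Ru1}, and your argument is essentially a faithful reconstruction of the standard proof found in those sources (eigenmeasure by Schauder--Tychonoff, eigenfunction from Ces\`aro averages plus bounded distortion, quasi-compactness via a Lasota--Yorke inequality and Ionescu-Tulcea--Marinescu, peripheral spectrum killed by mixing, pressure identified through the variational principle, and uniqueness via Bowen's Gibbs bounds). Two points in your sketch are stated a little loosely, though both are repairable by the standard arguments: the uniform lower bound $\lambda^{-n}L_\psi^n 1 \geq c_1$ needs aperiodicity of $A$ (to connect cylinders in a bounded number of steps) in addition to bounded distortion, since for a non-full shift a cylinder need not map onto all of $\Lambda$; and the eigen-equation for the limit $h$ does not follow from $L_\psi^*\nu=\lambda\nu$ alone, but from telescoping the Ces\`aro averages, $\lambda^{-1}L_\psi h_N - h_N = N^{-1}\bigl(\lambda^{-N}L_\psi^N 1 - 1\bigr) \to 0$ uniformly, the eigenmeasure being what guarantees the uniform two-sided bounds on $\lambda^{-n}L_\psi^n 1$. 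With those repairs your outline is correct and matches the proofs in the cited literature.
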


\begin{corollary} \label{spectraldecomp} There exists $\lambda_\psi <
  e^{P(\psi)}$ 
such
  that,
for any $f \in
  C^\alpha(\Lambda,\mathbb C)$,
we have
\begin{align*}
L_\psi^n f = \left(\int f \, d\nu\right) h e^{nP(\psi)}+ O(\lambda_\psi^n).
\end{align*} 
\end{corollary}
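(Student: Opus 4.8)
The plan is to read the asymptotics directly off the spectral decomposition supplied by Proposition \ref{rpf}. First I would pass to the normalized operator $\widetilde{L} := e^{-P(\psi)} L_\psi$, so that the leading eigenvalue becomes $1$ and the rest of the spectrum lies inside the disk of radius $\theta < 1$. Then I would invoke the functional-analytic fact that an operator on a Banach space with a simple isolated eigenvalue $1$ and the remainder of its spectrum contained in a disk of radius $\theta < 1$ admits a decomposition $\widetilde{L} = P + N$, where $P$ is the rank-one spectral projection onto the eigenline $\mathbb{C}h$ associated to the eigenvalue $1$, $N = \widetilde{L}(I-P)$ has spectral radius at most $\theta$, and $PN = NP = 0$. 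This is standard: $P$ is the Riesz projection $\frac{1}{2\pi i}\oint (\zeta I - \widetilde{L})^{-1}\, d\zeta$ around a small circle enclosing $1$ and no other spectrum.

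Next I would identify $P$ explicitly. Since the eigenvalue $1$ is simple with eigenfunction $h$ (from (i) of Proposition \ref{rpf}) and the dual eigenmeasure $\nu$ satisfies $\widetilde{L}^*\nu = \nu$ (from (ii), after normalization), the rank-one projection must be $Pf = \left(\int f\, d\nu\right) h$; the normalization $\int h\, d\nu = 1$ guarantees $P^2 = P$ and $Ph = h$. Iterating the decomposition, $\widetilde{L}^n = P + N^n$ because of the orthogonality of $P$ and $N$, so
\begin{align*}
L_\psi^n f = e^{nP(\psi)}\widetilde{L}^n f = e^{nP(\psi)}\left(\int f\, d\nu\right) h + e^{nP(\psi)} N^n f.
\end{align*}
Finally I would control the error term. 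Choosing any $\theta < \theta' < 1$, the spectral radius formula gives $\|N^n\|_\alpha \leq C_{\theta'}(\theta')^n$ for all $n$ and some constant $C_{\theta'}$ depending on $\theta'$ (and on $\|\widetilde{L}\|_\alpha$). Hence the remainder is bounded in the $C^\alpha$-norm, a fortiori in the sup-norm, by $C_{\theta'}\,\|f\|_\alpha\, e^{nP(\psi)}(\theta')^n$. Setting $\lambda_\psi := \theta' e^{P(\psi)} < e^{P(\psi)}$ yields exactly the claimed estimate $L_\psi^n f = \left(\int f\, d\nu\right) h\, e^{nP(\psi)} + O(\lambda_\psi^n)$, with the implied constant proportional to $\|f\|_\alpha$.

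The only real subtlety — not so much an obstacle as a point requiring care — is the passage from "the nonleading spectrum lies in the closed disk of radius $\theta e^{P(\psi)}$" to the quantitative bound $\|N^n\|_\alpha = O((\theta')^n)$: the spectral radius controls $\limsup_n \|N^n\|^{1/n}$ but one wants a uniform bound for all $n$, which is why I enlarge $\theta$ slightly to $\theta'$ and absorb the transient behaviour into the constant $C_{\theta'}$. Everything else is a direct application of the Riesz decomposition for an operator with a spectral gap, and the identification of $P$ with $f \mapsto \left(\int f\, d\nu\right)h$ is forced by simplicity of the eigenvalue together with the normalization already fixed in Proposition \ref{rpf}.
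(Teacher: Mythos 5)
Your proposal is correct and follows the same route as the paper: both isolate the leading eigenvalue via the Riesz spectral projection, identify the rank-one part as $f \mapsto \bigl(\int f\,d\nu\bigr)h$ using simplicity and the normalization $\int h\,d\nu = 1$, and control the complementary piece via the spectral radius formula by choosing $\lambda_\psi$ (equivalently your $\theta'$) strictly between the subleading spectral radius and $e^{P(\psi)}$. The only cosmetic difference is that you first pass to the normalized operator $e^{-P(\psi)}L_\psi$, whereas the paper works directly with $L_\psi$; this changes nothing of substance.
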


\begin{proof}
We recall the following basic fact from spectral theory 
(see, for example, \cite{BP} or \cite{Kato}).
Let $L : B \to B$ be a bounded linear operator on a Banach space $B$
with spectrum $\mathrm{spec}(L)=\Sigma \subset \mathbb C$.
If $\Sigma$ can be decomposed into two disjoint non-empty
sets $\Sigma_1$ and $\Sigma_2$ and if $\gamma$ is a simple closed
curve which is disjoint from $\Sigma$ and which has $\Sigma_1$ in its
interior and $\Sigma_2$ in its exterior then $\Pi : B \to B$ defined
by
\begin{align*}
\Pi = \frac{1}{2\pi i} \int_\gamma (z-L)^{-1} \, dz,
\end{align*}
is a projection (i.e. $\|\Pi\|=1$ and $\Pi^2=\Pi$). Moreover,
$B = B_1 \oplus B_2$, where $B_1=\Pi_1(B)$ and $B_2=(I-\Pi)(B)$ are
closed and $L$-invariant subspaces with $\mathrm{spec}(L|B_1) =
\Sigma_1$
and $\mathrm{spec}(L|B_2) =
\Sigma_2$.

Now condsider the operator $L_\psi : C^\alpha(\Lambda,\mathbb C) \to
C^\alpha(\Lambda,\mathbb C)$. By Theorem \ref{rpf}, we may decompose
its spectrum into $\Sigma_1= \{e^{P(\psi)}\}$ and a disjoint set 
$\Sigma_2$. Thus, we may decompose the operator 
$L_\psi$ as a sum
\begin{align*}
L_\psi^n = L_\psi^n \Pi + L_\psi^n (I-\Pi) 
= e^{nP(\psi)} \nu(\cdot)h + L_\psi^n(I-\Pi),
\end{align*}
where $\Pi_1 = \nu(\cdot) h$ is the projection onto the eigenspace
spanned by $e^{P(\psi)}$.  Furthermore, since $e^{P(\psi)}$ is
strictly maximal in modulus, we have
\begin{align*}
\lim_{n \to +\infty} \|L_\psi^n (I-\Pi)\|^{1/n} =
\sup\{|z| \hbox{ : } z \in \Sigma_2\} < e^{P(\psi)}.
\end{align*}
Choosing $\lambda_\psi$ slightly larger than
$\lim_{n \to +\infty} \|L_\psi^n (I-\Pi)\|^{1/n}$ completes the proof.
\end{proof}

\begin{corollary}
The quantities $e^{P(\psi)}$, $h$ and $\nu$ in Theorem \ref{rpf}
all depend analytically on $\psi$.
\end{corollary}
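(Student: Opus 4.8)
The plan is to exhibit $e^{P(\psi)}$, $h$ and $\nu$ as the leading eigenvalue, eigenfunction and eigenfunctional of the transfer operator $L_\psi$ acting on $C^\alpha(\Lambda,\mathbb C)$, and then to invoke analytic perturbation theory. The first step is to verify that $\psi \mapsto L_\psi$ is analytic as a map into $B(C^\alpha(\Lambda,\mathbb C))$. Writing $L_\psi = L_0 \circ M_{e^\psi}$, where $M_g$ denotes multiplication by $g$ and $L_0 g(x) = \sum_{Ty=x} g(y)$ is the bounded transfer operator for the zero potential, this reduces to three observations: $C^\alpha(\Lambda,\mathbb C)$ is a Banach algebra (one has $\|fg\|_\alpha \le \|f\|_\alpha \|g\|_\alpha$), so $\psi \mapsto e^\psi = \sum_{n\ge 0}\psi^n/n!$ is an entire map $C^\alpha(\Lambda,\mathbb C)\to C^\alpha(\Lambda,\mathbb C)$; the assignment $g \mapsto M_g$ is bounded linear $C^\alpha(\Lambda,\mathbb C)\to B(C^\alpha(\Lambda,\mathbb C))$; and left composition with the fixed operator $L_0$ is bounded linear. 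Hence $\psi \mapsto L_\psi$ is analytic, and its restriction to the real subspace $C^\alpha(\Lambda,\mathbb R)$ extends holomorphically to a complex neighbourhood of it.

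Analyticity being a local statement, fix $\psi_0 \in C^\alpha(\Lambda,\mathbb R)$. By Proposition~\ref{rpf}, $e^{P(\psi_0)}$ is a simple eigenvalue of $L_{\psi_0}$ isolated from the remainder of the spectrum; choose a circle $\gamma$ enclosing $e^{P(\psi_0)}$ and disjoint from the rest of $\mathrm{spec}(L_{\psi_0})$. For $\psi$ in a complex neighbourhood of $\psi_0$ the resolvent $(z-L_\psi)^{-1}$ stays defined for $z\in\gamma$ and is jointly analytic there, so the Riesz projection $\Pi(\psi) = \frac{1}{2\pi i}\int_\gamma (z-L_\psi)^{-1}\,dz$ is an analytic family of projections; its rank, a continuous integer-valued function, remains equal to $1$. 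Consequently $L_\psi$ has a unique eigenvalue $\lambda(\psi)$ inside $\gamma$, depending analytically on $\psi$ (for instance via $\lambda(\psi)\Pi(\psi)=L_\psi\Pi(\psi)$). For real $\psi$ near $\psi_0$, the simplicity clause of Proposition~\ref{rpf} identifies $\lambda(\psi)$ with $e^{P(\psi)}$, which is therefore analytic in $\psi$ near $\psi_0$ (and, being positive, so is $P(\psi)=\log\lambda(\psi)$).

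For the eigendata I would read off the normalizations of Proposition~\ref{rpf} from $\Pi(\psi)$ and its adjoint. Fix $u\in C^\alpha(\Lambda,\mathbb C)$ and $\mu_0\in C^\alpha(\Lambda,\mathbb C)^*$ with $\Pi(\psi_0)u\neq 0$ and $\Pi(\psi_0)^*\mu_0\neq 0$, and put $\tilde h_\psi=\Pi(\psi)u$, $\tilde\nu_\psi=\Pi(\psi)^*\mu_0$; both are analytic and, by continuity, nonzero near $\psi_0$, and $\tilde h_\psi$ (respectively $\tilde\nu_\psi$) spans the one-dimensional eigenspace of $L_\psi$ (respectively $L_\psi^*$) for $e^{P(\psi)}$. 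Then $\nu_\psi := \tilde\nu_\psi/\tilde\nu_\psi(\mathbf 1)$ is analytic — the denominator is nonzero at $\psi_0$ because $\nu_{\psi_0}$ is a probability measure — and it is the unique eigenfunctional in that line with $\nu_\psi(\mathbf 1)=1$; since Proposition~\ref{rpf}'s probability eigenmeasure also lies in this line and evaluates to $1$ on $\mathbf 1$, the two coincide. Similarly $h_\psi := \tilde h_\psi/\nu_\psi(\tilde h_\psi)$ is analytic (after normalizing $u$ so that $\nu_{\psi_0}(\tilde h_{\psi_0})=1$, the denominator is nonzero near $\psi_0$) and is the unique element of the $L_\psi$-eigenspace with $\int h_\psi\,d\nu_\psi=1$, so it coincides with Proposition~\ref{rpf}'s $h$. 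Thus $h$ and $\nu$ depend analytically on $\psi$.

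The operator-theoretic input — analyticity of Riesz projections for an analytically varying family possessing an isolated simple eigenvalue — is entirely standard (see Kato, \textit{Perturbation Theory for Linear Operators}, Ch.~VII, or \cite{Kato}) and requires no new work. The step needing the most care is essentially bookkeeping: one must use the \emph{uniqueness} parts of Proposition~\ref{rpf} (simplicity of $e^{P(\psi)}$, uniqueness of the probability eigenmeasure, the normalization $\int h\,d\nu=1$) to be sure that the locally defined analytic families $\lambda(\psi)$, $h_\psi$, $\nu_\psi$ are genuinely the objects named in the statement, so that they patch together into globally well-defined analytic functions of $\psi\in C^\alpha(\Lambda,\mathbb R)$.
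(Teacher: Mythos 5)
Your proposal is correct and takes the same route as the paper: verify that $\psi\mapsto L_\psi$ is analytic and then appeal to analytic perturbation theory (Kato) for an isolated simple eigenvalue. The paper states this in two sentences; you have simply fleshed out the details — the Banach-algebra argument for analyticity of $\psi\mapsto L_\psi$, the Riesz projection, and the bookkeeping with normalizations that identifies the locally defined analytic families with the objects of Proposition~\ref{rpf} — all of which is sound and consistent with the intended argument.
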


\begin{proof}
First we note that $L_\psi$ depends analytically on $\psi$. The result
is then a standard consequence of the fact that $e^{P(\psi})$ is an
isolated simple eigenvalue for $L_\psi$ \cite{BP, Kato}.
\end{proof}

Recall that we defined a function $-\phi$ to be {\it normalized} if,
for all $x \in \Lambda$,
\begin{align*}
\sum_{Ty=x} e^{-\phi(y)} =1.
\end{align*}
In particular, this condition implies that $-\phi$ is strictly negative.
We may rewrite this condition in terms of transfer operators as
$L_{-\phi} 1=1$. The following consequence of Theoreom \ref{rpf}
shows that, given a H\"older continuous function, we may find another
which is normalized and which had the same Gibbs measure.

\begin{corollary} \label{normalize}
Suppose that $\psi, h, \nu, \mu$ are as in Theorem \ref{rpf}.  Then 
\begin{align*}
-\phi := \psi + \log h - \log h \circ T -
  P(\psi) \in C^\alpha(\Lambda,\mathbb R)
\end{align*} 
is normalized, $L_{-\phi}^*\mu=\mu$ and $\mu$ is the Gibbs
  state 
$-\phi$. 
\end{corollary}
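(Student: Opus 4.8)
The plan is to derive all four assertions from Ruelle's Perron--Frobenius theorem, Proposition~\ref{rpf}, applied to both $\psi$ and $-\phi$, by means of a single algebraic identity relating the two transfer operators. First I would record that $-\phi\in C^\alpha(\Lambda,\mathbb R)$: since $\psi\in C^\alpha$ and $P(\psi)$ is a constant, it suffices to note that $\log h$ and $\log h\circ T$ lie in $C^\alpha$. The eigenfunction $h$ is continuous and strictly positive on the compact set $\Lambda$, hence bounded away from $0$ and $\infty$, and since $\log$ is Lipschitz on $[\inf_\Lambda h,\sup_\Lambda h]$ we get $\log h\in C^\alpha$; and $T$, being $C^1$ on the open set $U\supseteq\Lambda$, is Lipschitz on a compact neighbourhood of $\Lambda$, so precomposition with $T$ preserves $C^\alpha$.

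The key point is the identity
\begin{align*}
L_{-\phi}g=\frac{1}{e^{P(\psi)}\,h}\,L_\psi(gh),\qquad g\in C(\Lambda,\mathbb C).
\end{align*}
To see it, note that for $x\in\Lambda$ and any $y$ with $Ty=x$ one has $\log h\circ T(y)=\log h(x)$, hence $e^{-\phi(y)}=e^{\psi(y)}h(y)/(e^{P(\psi)}h(x))$; summing $e^{-\phi(y)}g(y)$ over the finitely many $y$ with $Ty=x$ gives the claimed formula. Taking $g\equiv 1$ and using $L_\psi h=e^{P(\psi)}h$ yields $L_{-\phi}1=1$, i.e.\ $-\phi$ is normalized; moreover the positive numbers $e^{-\phi(y)}$, summed over $y$ with $Ty=x$, equal $1$, so each is $<1$ and $-\phi<0$.

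For the eigenmeasure statement, write $\mu=h\nu$ with $\nu$ a probability measure and $L_\psi^*\nu=e^{P(\psi)}\nu$, so that $\mu(\Lambda)=\int h\,d\nu=1$. For $g\in C(\Lambda,\mathbb C)$ the key identity gives $(L_{-\phi}g)\,h=e^{-P(\psi)}L_\psi(gh)$, whence
\begin{align*}
\int L_{-\phi}g\,d\mu=\int(L_{-\phi}g)\,h\,d\nu=e^{-P(\psi)}\int L_\psi(gh)\,d\nu=e^{-P(\psi)}\int gh\,d(L_\psi^*\nu)=\int gh\,d\nu=\int g\,d\mu,
\end{align*}
so $L_{-\phi}^*\mu=\mu$.

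Finally, that $\mu$ is the Gibbs state for $-\phi$: apply Proposition~\ref{rpf} to the normalized H\"older potential $-\phi$. Since $L_{-\phi}1=1$, the constant $1$ is a strictly positive eigenfunction, so $e^{P(-\phi)}=1$ (the leading eigenvalue being the only one admitting a positive eigenfunction), and we may take $h_{-\phi}=1$; since $\mu$ is a probability measure with $L_{-\phi}^*\mu=e^{P(-\phi)}\mu$, it is the normalized eigenmeasure $\nu_{-\phi}$; and $\int h_{-\phi}\,d\nu_{-\phi}=\int 1\,d\mu=1$, so the final assertion of Proposition~\ref{rpf} identifies the Gibbs measure for $-\phi$ with $h_{-\phi}\nu_{-\phi}=\mu$. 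Equivalently one can argue from the variational principle: for $m\in\mathcal M_T$, $T$-invariance gives $\int\log h\circ T\,dm=\int\log h\,dm$, so $h_T(m)+\int(-\phi)\,dm=h_T(m)+\int\psi\,dm-P(\psi)$, and the variational problems for $-\phi$ and $\psi$ have the same maximizer $\mu$; this also shows $P(-\phi)=0$ and matches the remark that passing to the normalized potential does not change the Gibbs measure. No step is a genuine obstacle here; the only thing requiring slight care is the regularity claim, which rests on $h$ being bounded away from zero and on $T$ being Lipschitz near $\Lambda$.
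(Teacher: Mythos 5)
Your proof is correct and follows essentially the same route as the paper: the normalization follows from the conjugation $L_{-\phi}g = e^{-P(\psi)}h^{-1}L_\psi(gh)$ (the paper computes exactly this at $g=1$), the Gibbs-measure identification uses the variational principle via $T$-invariance of $m$, and the eigenmeasure claim follows from $L_\psi^*\nu = e^{P(\psi)}\nu$. You are somewhat more explicit than the paper in two places (the regularity of $\log h$ and $\log h\circ T$, and the direct verification that $L_{-\phi}^*\mu=\mu$ rather than simply citing Proposition~\ref{rpf}), but the underlying argument is the same.
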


\begin{proof} Since $h>0$, $-\phi$ is well-defined. 
For any $m \in \mathcal M_T$,
\begin{align*}
h_T(m) + \int -\phi \, dm = h_T(m) + \int \psi  \, dm -P(\psi),
\end{align*}
so it follows that  $P(-\phi)=0$ and that $\mu$ is the Gibbs measure 
for $-\phi$.
We also have
\begin{align*}
L_{-\phi} 1(x) &= \sum_{Ty=x} e^{-\phi(y)} 
= \sum_{Ty=x} e^{\psi(y) +\log h(y) - \log h(Ty) - P(\psi)} \\
&= \frac{e^{-P(\psi)}}{h(x)} \sum_{Ty=x} e^{\psi(y)} h(y)
= \frac{e^{-P(\psi)}}{h(x)} L_\psi h(x) \\
&= \frac{e^{-P(\psi)}}{h(x)} e^{P(\psi)} h(x) =1,
\end{align*}
so $-\phi$ is normalized. By Theorem \ref{rpf}, $L_{-\phi}^*\mu=\mu$.
\end{proof}

To prove Theorem \ref{noncommmeas}, we shall need to consider a family
of transfer operators $L_{-t\phi}$, for $t \in \mathbb R$. By Theorem
\ref{rpf}, these will have a maximal eigenvalue equal to $e^{P(-t\phi)}$.
We end the section with a result on the 
regularity and derivative of the
function
$t \mapsto P(-t\phi)$.

\begin{lemma} \label{pressure}
The function $t \mapsto P(-t\phi)$ is real-analytic and strictly
decreasing.
Furthermore,
\begin{align*}
\left.\frac{dP(-t\phi)}{dt}\right|_{t=1} = -\int \phi \,
d\mu, 
\end{align*}
where $\mu$ is the Gibbs measure for $-\phi$.
\end{lemma}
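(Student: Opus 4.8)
The plan is to read off real-analyticity from the analytic dependence results already established and to obtain the derivative formula by differentiating the eigenvalue equation for the transfer operator in the parameter $t$.

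First I would observe that, if $\alpha>0$ is a H\"older exponent for $\phi$, then $t \mapsto -t\phi$ is a real-analytic (indeed affine-linear) map $\mathbb R \to C^\alpha(\Lambda,\mathbb R)$. Composing with the analytic dependence of $e^{P(\psi)}$ on $\psi$ recorded just above, we get that $t \mapsto e^{P(-t\phi)}$ is real-analytic and strictly positive, so $t \mapsto P(-t\phi) = \log e^{P(-t\phi)}$ is real-analytic. The same analytic dependence provides real-analytic families $h_t \in C^\alpha(\Lambda,\mathbb R)$ and $\nu_t \in C(\Lambda,\mathbb R)^*$ with $L_{-t\phi}h_t = e^{P(-t\phi)}h_t$ and $L_{-t\phi}^*\nu_t = e^{P(-t\phi)}\nu_t$, normalized (following Proposition \ref{rpf}) so that $\nu_t$ is a probability measure and $\int h_t\,d\nu_t = 1$; then $\mu_t \letbe h_t\nu_t$ is the Gibbs measure for $-t\phi$ and $\mu_1 = \mu$.

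Next, set $\lambda(t) \letbe e^{P(-t\phi)}$ and differentiate $L_{-t\phi}h_t = \lambda(t)h_t$ in $t$. Termwise differentiation of the (finite) sum defining $L_{-t\phi}$ gives $\frac{d}{dt}L_{-t\phi}g = L_{-t\phi}\bigl((-\phi)g\bigr)$, so
\[
L_{-t\phi}\bigl((-\phi)h_t\bigr) + L_{-t\phi}\dot h_t = \dot\lambda(t)\,h_t + \lambda(t)\dot h_t .
\]
Applying the eigenmeasure $\nu_t$ and using $\nu_t(L_{-t\phi}g) = \lambda(t)\nu_t(g)$ cancels the terms involving $\dot h_t$, while $\nu_t(h_t) = \int h_t\,d\nu_t = 1$; this leaves $\dot\lambda(t) = \lambda(t)\int(-\phi)h_t\,d\nu_t = -\lambda(t)\int\phi\,d\mu_t$. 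Hence $\frac{d}{dt}P(-t\phi) = \dot\lambda(t)/\lambda(t) = -\int\phi\,d\mu_t$, and evaluating at $t=1$ gives $-\int\phi\,d\mu$. Finally, since $-\phi$ is normalized it is strictly negative, so $\phi>0$ on the compact set $\Lambda$, whence $\int\phi\,d\mu_t \ge \min_\Lambda\phi > 0$ for every $t$; thus the derivative is strictly negative throughout $\mathbb R$ and $t \mapsto P(-t\phi)$ is strictly decreasing.

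The only delicate point — and the step I would be most careful about — is justifying that $h_t$ and $\nu_t$ can be taken to depend differentiably on $t$ with the stated normalizations, so that the termwise differentiation and the cancellation under $\nu_t$ are legitimate. This is precisely what the analytic perturbation theory of the isolated simple eigenvalue $\lambda(t)$ of $L_{-t\phi}$ delivers (as invoked in the corollary on analytic dependence preceding the lemma), so there is no genuine obstacle; one merely needs to fix the normalization $\nu_t(1)=1$, $\int h_t\,d\nu_t=1$ before differentiating.
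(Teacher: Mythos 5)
The paper does not supply a proof of Lemma \ref{pressure}; it is stated as a known fact from thermodynamic formalism (cf.\ Parry--Pollicott and Ruelle in the bibliography), with the groundwork laid by the preceding corollary on analytic dependence. Your proof is correct and is precisely the standard argument: real-analyticity by composing the affine map $t \mapsto -t\phi$ with the analytic eigenvalue map $\psi \mapsto e^{P(\psi)}$, and the derivative formula by differentiating the eigenvalue equation $L_{-t\phi}h_t = \lambda(t)h_t$ and pairing with $\nu_t$ so the $\dot h_t$ terms cancel. One small point worth flagging explicitly: the strict monotonicity hinges on $\phi > 0$ on $\Lambda$, which follows from the normalization assumption $\sum_{Ty=x}e^{-\phi(y)}=1$ (together with every point having at least two $T$-preimages); without that hypothesis the lemma as stated would fail (e.g.\ $\phi \equiv 0$ gives constant pressure). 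The surrounding text makes the normalization implicit, and your argument uses it correctly, but since the lemma itself does not restate the hypothesis it is worth saying out loud.
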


\section{Proof of Theorem \ref{spectraltriple}}

In this section we proof that the $(H,A,D_\phi)$ we have constructed
is a spectral triple. The key point is that the locally constant
functions give a dense subalgebra of $C(\Lambda,\mathbb C)$ on which 
$\|[D_\phi,\pi(f)]\|$ is finite.

\begin{proof}[Proof of Theorem \ref{spectraltriple}]
Suppose that $f_1,f_2 \in C(\Lambda,\mathbb C)$ and that 
$\pi(f_1)=\pi(f_2)$. Then, in particular, by definition, for each $w \in W^*$,
$f_1(wy^{\mathfrak t(w)}) = f_2(wy^{\mathfrak t(w)})$.
The set $\{wy^{\mathfrak t(w)} \hbox{ : } w \in W^*\}$ is dense in
$\Sigma_A^+$ and
thus the set  $p(\{wy^{\mathfrak t(w)} \hbox{ : } w \in W^*\})$ is
dense in $\Lambda$. Hence $f_1 =f_2$ and 
$\pi : C(\Lambda,\mathbb C) \to B(H)$ is faithful.

It is clear from its definition that $D_\phi$ is
self-adjoint. The eigenvalues of $D_\phi$ are the numbers
$$\bigcup_{n=1}^\infty \{e^{\phi^n(wx^{\mathfrak t(w)})} \hbox{ : } w \in W_n\}$$
(counted with the appropriate multiplicity). In particular, $0$ is not 
an eigenvalue. Thus, the resolvent of $D_\phi$ is compact provided
$D_\phi^{-1}$ is compact and it is clear that $D_\phi^{-1}$, defined by
\begin{align*}
D_\phi^{-1}\left(\bigoplus_{w \in W^*} \left(\begin{matrix} \xi_1(w)
      \\ \xi_2(w) \end{matrix} \right)\right) 
= \bigoplus_{n=1}^\infty \bigoplus_{w \in W^*} e^{-\phi^n(wx^{\mathfrak t(w)})}
\left(\begin{matrix} 0 & 1 \\ 1 & 0 \end{matrix} \right)
\left(\begin{matrix} \xi_1(w) \\ \xi_2(w) \end{matrix} \right)
\end{align*}
is a compact operator. For $f \in C(\Lambda,\mathbb C)$,
\begin{align*}
&[D_\phi,\pi(f)]\left(\bigoplus_{w \in W^*} \left(\begin{matrix} \xi_1(w) \\
    \xi_2(w) \end{matrix} \right)\right) \\
&= \bigoplus_{w \in W^*} 
(f(wy^{\mathfrak t(w)})-f(wz^{\mathfrak t(w)}))
e^{\phi^n(wx^{\mathfrak t(w)})} 
\left(\begin{matrix} 0 & -1 \\ 1 & 0 \end{matrix}
\right)
\left(\begin{matrix} \xi_1(w) \\ \xi_2(w) \end{matrix} \right) \\
&= \bigoplus_{w \in W^*} 
(f(wy^{\mathfrak t(w)})-f(wz^{\mathfrak t(w)})) e^{\phi^n(wx^{\mathfrak t(w)})}
\left(\begin{matrix} -\xi_2(w) \\ \xi_1(w) \end{matrix}
\right). 
\end{align*}
Let $A_0 = LC(\Lambda)$, the subalgebra of locally constant functions 
on $\Lambda$. Recall that $A_0$ is dense in $A$. If $f \in A_0$ then there 
exists $N \geq 1$ such that
$$f(wy^{\mathfrak t(w)})=f(wz^{\mathfrak t(w)}) \quad 
\text{for all } w \in \bigcup_{n=N+1}^\infty
W_n.$$
Then
\begin{align*}
&\left\|[D,\pi(f)]\left(\bigoplus_{w \in W^*} \left(\begin{matrix} \xi_1(w) \\
    \xi_2(w) \end{matrix} \right)\right)\right\|_2^2 \\
&= \sum_{n=1}^\infty \sum_{w \in W_n}
(f(wy^{\mathfrak t(w)})-f(wz^{\mathfrak t(w)}))^2
e^{2\phi^n(wx^{\mathfrak t(w)})} ((-\xi_2(w))^2 + \xi_1(w)^2) \\
&\leq 2\|f\|_\infty e^{2N\|\phi\|_\infty}
\sum_{n=1}^N 
\sum_{w \in W_n}  ((-\xi_2(w))^2 + \xi_1(w)^2) \\
&\leq 2\|f\|_\infty e^{2N\|\phi\|_\infty} \|\xi\|_2^2. 
\end{align*}
Hence $\|[D,\pi(f)]\|<+\infty$. 
\end{proof}

\section{Proof of Theorem \ref{noncommmeas}}

 We will use the following version of the Hardy-Littlewood Tauberian
Theorem.
(See \cite{Co1}, Chapter IV, \S 2.$\beta$,
  Proposition 4.)

\begin{lemma} \label{hardylittlewood} 
Suppose that $A \in K(H)$ is a postive operator with eigenvalues 
$\{a_n\}_{n=1}^\infty$ (arranged in decreasing order)
and that $A \in \mathcal
L^{1,\infty}(H)$. 
Write
\begin{align*}
\zeta(t) = \sum_{n=1}^\infty a_n^t.
\end{align*}
Then
\begin{align*}
\lim_{t \to 1+}(t-1)\zeta(t) =L,
\end{align*}
if and only if
\begin{align*}
\lim_{n \to +\infty} \frac{1}{\log n} \sum_{k=1}^n a_k =L.
\end{align*}
\end{lemma}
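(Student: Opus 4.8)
This lemma is the version of the Hardy--Littlewood (Karamata) Tauberian theorem that underpins the theory of Dixmier traces, so the quickest route is simply to quote \cite{Co1}; what follows is the plan I would use for a self-contained proof. Write $\sigma_n=\sum_{k=1}^na_k$, so that $A\in\mathcal L^{1,\infty}(H)$ is precisely the statement $\sigma_n=O(\log n)$; hence also $a_n\le\sigma_n/n=O\bigl((\log n)/n\bigr)$ and $-\log a_n\ge\log n-O(\log\log n)$. If $a_n=0$ for all large $n$ then $A$ is of finite rank, $\zeta$ is a finite sum, and both sides assert $L=0$; so I would assume $a_n>0$ for all $n$ and, after rescaling $A$ (which multiplies both sides of the equivalence by the same positive constant), that $a_1<1$. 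The central object is the eigenvalue counting function $M(u)=\#\{n:a_n\ge e^{-u}\}$ for $u\ge0$, which is non-decreasing and integer-valued; since the $a_n$ are listed in decreasing order, $\{n:a_n\ge e^{-u}\}=\{1,\dots,M(u)\}$, and therefore $\sum_{a_n\ge e^{-u}}a_n=\sigma_{M(u)}$.

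The first step is to rewrite $\zeta$ as a Laplace--Stieltjes transform. From $\zeta(t)=\sum_na_n^t=\int_{[0,\infty)}e^{-tu}\,dM(u)$ and the substitution $t=s+1$ one gets
\begin{align*}
(t-1)\,\zeta(t)=s\int_0^\infty e^{-su}\,dG(u),\qquad G(u):=\int_0^u e^{-v}\,dM(v)=\sigma_{M(u)},
\end{align*}
with $G$ non-decreasing. As $dG$ is a positive measure, Karamata's Tauberian theorem (exponent $1$) applies with no further hypothesis and gives
\begin{align*}
\lim_{t\to1+}(t-1)\,\zeta(t)=L\iff\lim_{u\to\infty}\frac{G(u)}{u}=L\iff\lim_{u\to\infty}\frac{\sigma_{M(u)}}{u}=L .
\end{align*}
This is the conceptual heart of the argument, and positivity of $dG$ is exactly the Tauberian input.

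It then remains to show that $\sigma_{M(u)}/u\to L$ is equivalent to $\sigma_n/\log n\to L$, which is the step I expect to be the main obstacle. The bridge is that the continuous scale $u$ is comparable to $\log M(u)$: for an index $n$ with $a_n>a_{n+1}$ one has $M(u)=n$ throughout $u\in[-\log a_n,-\log a_{n+1})$, so $\sigma_{M(u)}/u\to L$ is equivalent to $\sigma_n/(-\log a_n)\to L$ along such $n$, and $na_n\le\sigma_n=O(\log n)$ sandwiches $-\log a_n$ between $\log n(1-o(1))$ and --- after a truncation argument using the auxiliary index $r=\max\{m:a_m\ge 1/n\}$, which is itself such an index, satisfies $-\log a_r\le\log n$ and $\sigma_r\ge\sigma_n-1$, and for which evaluating $\sigma_{M(u)}/u\to L$ at $u=-\log a_r$ yields a contradiction unless $\sigma_n\le L\log n(1+o(1))$ --- the bound $\log n(1+o(1))$; this forces $\sigma_n/\log n\to L$ along all indices with $a_n>a_{n+1}$. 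The remaining indices lie in runs $[n_s,n_e]$ of equal eigenvalues, and here convergence of $\sigma_{M(u)}/u$ --- together with the fact that $\sigma_{M(u)}/u$ is strictly decreasing on the $u$-plateau where $M\equiv n_s-1$ --- forces $-\log a_{n_e}\sim-\log a_{n_s-1}\sim\log n_s$, which combined with $-\log a_{n_e}\ge\log n_e(1-o(1))$ makes the run logarithmically short, so that the interior values of $\sigma_n/\log n$, caught between $\sigma_{n_s-1}/\log n_e$ and $\sigma_{n_e}/\log n_s$, also converge to $L$. The reverse implication runs along the same lines, and the case $L=0$ is easier. The genuine difficulty throughout is this continuous-to-discrete book-keeping, in particular handling the runs of equal eigenvalues; the integral representation and the invocation of Karamata's theorem are routine.
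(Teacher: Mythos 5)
The paper does not prove this lemma: it quotes it as a known Tauberian theorem with a citation to Connes \cite{Co1}, Chapter IV, \S 2.$\beta$, Proposition 4, so there is no in-paper argument to compare against. Your plan --- rewrite $(t-1)\zeta(t)$ as $s\int_0^\infty e^{-su}\,dG(u)$ with $G(u)=\sigma_{M(u)}$, invoke Karamata's Tauberian theorem with exponent one (monotonicity of $G$ being the Tauberian input), and then trade the continuous scale $u$ for $\log n$ --- is the standard and correct skeleton, and the Laplace--Stieltjes identity and the Karamata step are fine.

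The discrete-to-continuous bookkeeping, which you rightly identify as the real content, is where the sketch would not survive being written out. The sentence that $\sigma_{M(u)}/u\to L$ ``is equivalent to $\sigma_n/(-\log a_n)\to L$ along such $n$'' is not an equivalence on its own: on the plateau $[-\log a_n,-\log a_{n+1})$ the continuous limit also constrains $\sigma_n/(-\log a_{n+1})$, and whether $-\log a_{n+1}$ is comparable to $-\log a_n$ is exactly what your later sandwiching and run paragraphs are supposed to establish. As written the logic is circular; the correct order is to take only the one-sided implication $\sigma_{M(u)}/u\to L\Rightarrow\sigma_n/(-\log a_n)\to L$ for strict $n$, then the truncation bound $\limsup\sigma_n/\log n\le L$, then $-\log a_n\sim\log n$, and only afterwards the run estimate. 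Separately, ``the reverse implication runs along the same lines'' hides a genuinely new ingredient: the Abelian half needs $\log M(u)/u\to1$, which for $L>0$ reduces to the lower bound $a_n\ge n^{-1-o(1)}$; this comes from pitting $\sigma_{n^{1+\epsilon}}-\sigma_n\sim L\epsilon\log n$ against $\sigma_m-\sigma_n\le(m-n)a_{n+1}$, an argument of a different shape from the truncation index, and it fails for $L=0$, so that case is not a special instance of the same bound but an (easy) separate one. The strategy is right and the lemma is of course true, but the bridge needs these two repairs before it is a proof.
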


We will suppose for the moment that $f \in C^\alpha(\Lambda,\mathbb R)$
and that $f \geq 0$,
so that $\pi(f)|D_\phi|^{-1}$ is a positive operator.
The eigenvalues of $\pi(f)|D_\phi|^{-1}$ are the numbers
$$\bigcup_{n=1}^\infty \{f(wy^{\mathfrak t(w)}) 
e^{-\phi^n(wx^{\mathfrak t(w)})}, f(wz^{\mathfrak t(w)})e^{-\phi^n(wx^{\mathfrak t(w)})}  
\hbox{ : } w \in
W_n\}$$
(counted with multiplicity). 
We define a spectral zeta function
\begin{align*}
\zeta_f(t) = \sum_{n=1}^\infty \sum_{w \in W_n} \left(\left(f(wy^{\mathfrak t(w)})
  e^{-\phi^n(wx^{\mathfrak t(w)})}\right)^t
+\left(f(wz^{\mathfrak t(w)}) e^{-\phi^n(wx^{\mathfrak t(w)})}\right)^t\right)
\end{align*}
and we also write
\begin{align*}
&\zeta_{f,x}(t) = \sum_{n=1}^\infty \sum_{w \in W_n} \left(f(wx^{\mathfrak t(w)})
  e^{-\phi^n(wx^{\mathfrak t(w)})}\right)^t, \\
&\zeta_{f,y}(t) =\sum_{n=1}^\infty \sum_{w \in W_n}
\left(f(wy^{\mathfrak t(w)}) e^{-\phi^n(wx^{\mathfrak t(w)})}\right)^t,
\ \mathrm{and} \\
&\zeta_{f,z}(t) =\sum_{n=1}^\infty \sum_{w \in W_n}
\left(f(wz^{\mathfrak t(w)}) e^{-\phi^n(wx^{\mathfrak t(w)})}\right)^t.
\end{align*}

In order to study these functions, it will be convenient to introduce
another
one which is easier to express in terms of transfer operators. Hence
we define
\begin{align*}
\eta_f(s) = \sum_{n=1}^\infty \sum_{w \in W_n} f(wx^{\mathfrak t(w)}) e^{-s\phi^n(wx^{\mathfrak t(w)})}.
\end{align*}

\begin{lemma} \label{residue} For $f \in C^\alpha(\Lambda,\mathbb R)$
  with
$f >0$,
$\eta_f(t)$ converges 
for
$t>1$ and 
\begin{align*}
\lim_{t \to 1+}(t-1)\eta_f(t)= \left(\frac{\int f \, d\mu}
{\int \phi \, d\mu}\right) 
\left(\sum_{j=1}^k
(L_{-\phi}\chi_j)(x^{(j)})\right).
\end{align*}
\end{lemma}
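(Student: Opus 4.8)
The plan is to rewrite $\eta_f$ in terms of iterates of the transfer operator $L_{-t\phi}$, to locate its pole at $t=1$ via the Ruelle--Perron--Frobenius spectral decomposition (Proposition~\ref{rpf}, Corollary~\ref{spectraldecomp}), and to evaluate the residue using the pressure derivative of Lemma~\ref{pressure}. The first step is the identity
\begin{align*}
\eta_f(t)=\sum_{j=1}^{k} e^{-t\phi(jx^j)}\sum_{m=0}^{\infty}\bigl(L_{-t\phi}^{m}f\bigr)(jx^j).
\end{align*}
To see it, group the words $w\in W_n$ by their last letter $\mathfrak t(w)=j$: for fixed $n\ge 1$ and $j$, the sequences $\{wx^{\mathfrak t(w)}:w\in W_n,\ \mathfrak t(w)=j\}$ are precisely the $\sigma^{n-1}$-preimages of $jx^j$, and since $\sigma^{n-1}(wx^j)=jx^j$ one has $\phi^n(wx^j)=\phi(jx^j)+\phi^{n-1}(wx^j)$, so that $\sum_{w\in W_n,\ \mathfrak t(w)=j}f(wx^j)e^{-t\phi^n(wx^j)}=e^{-t\phi(jx^j)}(L_{-t\phi}^{n-1}f)(jx^j)$; because $f>0$ all terms are nonnegative and the rearrangement of the double sum is justified. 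I would also record that $jx^j$ is the unique $\sigma$-preimage of $x^j$ lying in $[j]$, whence $e^{-\phi(jx^j)}=(L_{-\phi}\chi_j)(x^j)$; this rewrites the eventual answer in the form stated in the lemma.

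Convergence for $t>1$ is then immediate: since $-\phi$ is normalized, $L_{-\phi}1=1$ and $P(-\phi)=0$, so by Lemma~\ref{pressure} $P(-t\phi)<0$ for $t>1$, hence the spectral radius $e^{P(-t\phi)}$ of $L_{-t\phi}$ on $C^{\alpha}(\Lambda,\mathbb C)$ is strictly less than $1$ and $\sum_m\|L_{-t\phi}^{m}f\|_{\alpha}<\infty$. For the residue, I would apply Corollary~\ref{spectraldecomp} with $\psi=-t\phi$: writing $\Pi_t$ for the rank-one eigenprojection and $h_t,\nu_t$ for the normalized eigenfunction and eigenprobability,
\begin{align*}
\sum_{m=0}^{\infty}L_{-t\phi}^{m}f=\frac{\bigl(\int f\,d\nu_t\bigr)\,h_t}{1-e^{P(-t\phi)}}+\sum_{m=0}^{\infty}L_{-t\phi}^{m}(I-\Pi_t)f,
\end{align*}
where the last sum stays bounded in $C^{\alpha}$ as $t\to1+$ (the spectral radius of $L_{-t\phi}$ on the complementary invariant subspace is bounded away from $1$ near $t=1$).

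Now evaluate at $jx^j$, multiply by $e^{-t\phi(jx^j)}$, sum over $j$, multiply by $(t-1)$, and let $t\to1+$. The bounded term drops out; $\int f\,d\nu_t\to\int f\,d\mu$, $h_t(jx^j)\to h_1(jx^j)=1$ and $\nu_1=\mu$ (at $t=1$ normalization of $-\phi$ forces $h_1\equiv1$ and $L_{-\phi}^{*}\mu=\mu$, Corollary~\ref{normalize}), $e^{-t\phi(jx^j)}\to e^{-\phi(jx^j)}$, and, using $P(-\phi)=0$, Lemma~\ref{pressure}, and $\int\phi\,d\mu>0$ (since $\phi>0$),
\begin{align*}
\lim_{t\to1+}\frac{t-1}{1-e^{P(-t\phi)}}=\frac{1}{-\left.\frac{dP(-t\phi)}{dt}\right|_{t=1}}=\frac{1}{\int\phi\,d\mu}.
\end{align*}
Assembling the pieces gives $\lim_{t\to1+}(t-1)\eta_f(t)=\bigl(\int f\,d\mu\,/\int\phi\,d\mu\bigr)\sum_{j=1}^k e^{-\phi(jx^j)}$, which is the claimed value by the first step.

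The main obstacle I anticipate is the uniform (in $t$, as $t\to1+$) control of the subdominant contribution $\sum_{m}L_{-t\phi}^{m}(I-\Pi_t)f$: one needs the second-largest spectral value of $L_{-t\phi}$, and the norms of the associated projections and resolvents, to be locally uniform in $t$. This is supplied by the analytic dependence of the Ruelle--Perron--Frobenius data on the potential together with a compactness argument on a short interval $[1,1+\varepsilon]$; everything else is routine bookkeeping with the transfer operator.
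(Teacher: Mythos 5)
Your proof is correct and follows essentially the same route as the paper: both rewrite $\eta_f$ as a geometric-like series of transfer-operator iterates, apply the Ruelle--Perron--Frobenius spectral decomposition (Corollary~\ref{spectraldecomp}) to isolate the dominant rank-one part, and then use Lemma~\ref{pressure} to evaluate the residue at $t=1$. Your formula $\eta_f(t)=\sum_j e^{-t\phi(jx^j)}\sum_{m\ge 0}(L_{-t\phi}^m f)(jx^j)$ is equivalent to the paper's $\sum_{n\ge 1}\sum_j L_{-t\phi}(\chi_j L_{-t\phi}^{n-1}f)(x^j)$, since the only $\sigma$-preimage of $x^j$ in $[j]$ is $jx^j$, and your explicit remark about uniform control of the subdominant spectral contribution near $t=1$ is a slightly more careful version of the estimate the paper leaves implicit.
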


\begin{proof}
Provided $\eta_f(t)$ converges, we may use the definitions of
$x^{(j)}$ and
$L_{-t\phi}$ to write
\begin{align*}
\eta_f(t) &= \sum_{n=1}^\infty \sum_{w \in W_n}
f(wx^{\mathfrak t(w)}) e^{-t\phi^n(wx^{\mathfrak t(w)})} 
\\
&= \sum_{n=1}^\infty \sum_{j=1}^k L_{-t\phi}^n(\chi_j \circ T^{n-1} f)(x^{j}) 
\\
&= \sum_{n=1}^\infty \sum_{j=1}^k 
L_{-t\phi} (\chi_j L_{-t\phi}^{n-1}f)(x^{j}).
\end{align*}

By Theorem \ref{rpf} and Lemma \ref{pressure}, for $t >1$, 
$L_{-t\phi}$ has spectral radius $e^{P(-t\phi)}<1$.  Thus, 
using the spectral radius formula, 
it is easy to see that $\eta_f(t)$ converges. Furthermore, 
by Corollary \ref{spectraldecomp}, we have
\begin{align*}
\eta_f(t)
&= \sum_{n=0}^\infty \sum_{j=1}^k \left(\int f \, d\nu_t\right) e^{nP(-t\phi)}
\left(L_{-t\phi}(\chi_j h_t)\right)(x^{j}) +\sum_{n=0}^\infty q_n(t) \\
&= \left(\int f \, d\nu_t\right) \sum_{n=0}^\infty e^{nP(-t\phi)}
\left(\sum_{j=1}^k \left(L_{-t\phi}(\chi_j h_t)\right)(x^{j}) \right)
+\sum_{n=1}^\infty q_n(t)
\\
&=\left(\int f \, d\nu_t\right)
\frac{\sum_{j=1}^k \left(L_{-t\phi}(\chi_j h_t)\right)(x^{j}) }{1-e^{P(-t\phi)}}
+ 
\sum_{n=1}^\infty q_n(t), 
\end{align*}
where $h_t$ and $\nu_t$ are the eigenfunction and eigenmeasure
for $L_{-t\phi}$ given by Theorem \ref{rpf} and where
$q_n(t) = O(\lambda_{-t\phi}^n)$ (with $\lambda_{-t\phi} < e^{P(-t\phi)}$).
Since 
\begin{enumerate}
\item[(i)]
$t \mapsto e^{P(-t\phi)}$, $t \mapsto h_t$ and $t \mapsto \nu_t$
are all analytic;
\item[(ii)]
$e^{P(-\phi)}=1$, $h_1=1$ and $\nu_1=\mu$; and
\item[(iii)]
\begin{align*}
\left.\frac{de^{P(-t\phi)}}{dt}\right|_{t=1} = -\int \phi  \, d\mu;
\end{align*}
\end{enumerate}
we see that
$$\eta_f(t)= \left(\frac{\int f \, d\mu}{\int \phi \, d\mu}\right) 
\left(\sum_{j=1}^k
(L_{-\phi}\chi_j)(x^{j})\right)
\frac{1}{t-1}
+ a(t),$$
where $a(t)$ is finite for $t \geq 1$. 
\end{proof}

\begin{remark}
In fact, one can show (using the type of methods described in 
\cite{PP, Sh1}) that, considered as a function of a complex variable
$s$, $\eta_f(s)$ is analytic for $\mathrm{Re}(s)>1$, has a simple pole
at $s=1$ and, provided the sums of $\phi$ around periodic orbits do
not
all lie in a discrete subgroup of $\mathbb R$, apart from this pole,
$\eta_f(s)$ has an analytic
extension to a neighbourhood of $\mathrm{Re}(s) \geq 1$.
\end{remark}

\begin{lemma} \label{relatezetaeta}
For $f \in C^\alpha(\Lambda,\mathbb R)$
  with
$f >0$, $\zeta_{f,y}(t)$ and $\zeta_{f,z}(t)$ converge for $t>1$.
Furthermore,
we have
\begin{align*}
\lim_{t \to 1+} (t-1)\zeta_{f,y}(t) 
= \lim_{t \to 1+} (t-1)\zeta_{f,z}(t)
= \lim_{t \to 1+} (t-1)\eta_f(t).
\end{align*}
\end{lemma}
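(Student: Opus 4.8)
The key point is that replacing the point $wx^{\mathfrak t(w)}$ by $wy^{\mathfrak t(w)}$ (or $wz^{\mathfrak t(w)}$) inside the summand changes $f(wx^{\mathfrak t(w)})$ by a controlled amount, while the exponential weight $e^{-t\phi^n(wx^{\mathfrak t(w)})}$ is left untouched, so that $\zeta_{f,y}$ and $\zeta_{f,x}=\eta_f$ differ only through a perturbation of the $f$-factor. Since $f$ is $\alpha$-H\"older and $wy^{\mathfrak t(w)}$, $wx^{\mathfrak t(w)}$ agree in their first $n$ symbols, we have $d(wy^{\mathfrak t(w)},wx^{\mathfrak t(w)}) \le 2^{-n}$ and hence
\begin{align*}
\bigl|f(wy^{\mathfrak t(w)}) - f(wx^{\mathfrak t(w)})\bigr| \le |f|_\alpha\, 2^{-\alpha n}.
\end{align*}
First I would record that $\eta_f(t)$ converges for $t>1$ by Lemma \ref{residue}; the same spectral-radius argument (the weights $e^{-t\phi^n(wx^{\mathfrak t(w)})}$ appearing in $\zeta_{f,y}$, $\zeta_{f,z}$, $\eta_f$ are \emph{identical}) shows $\zeta_{f,y}(t)$ and $\zeta_{f,z}(t)$ converge for $t>1$ too, once one notes $f(wy^{\mathfrak t(w)})$, $f(wz^{\mathfrak t(w)})$ are bounded by $\|f\|_\infty$.

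\textbf{Estimating the difference.} Write $g_y = f \circ (\,\cdot\, y) $ loosely for the function $w \mapsto f(wy^{\mathfrak t(w)})$ and similarly $g_x$. The inequality $|a^t - b^t| \le t\,(\max(a,b))^{t-1}|a-b|$ for $a,b\ge 0$ applied with $a = g_y(w)e^{-\phi^n(\cdots)}$, $b = g_x(w)e^{-\phi^n(\cdots)}$ gives, since the exponential factor is common,
\begin{align*}
\Bigl|\bigl(g_y(w)e^{-\phi^n}\bigr)^t - \bigl(g_x(w)e^{-\phi^n}\bigr)^t\Bigr|
\le t\,\bigl(C e^{-\phi^n}\bigr)^{t-1}\, e^{-\phi^n}\,|f|_\alpha\, 2^{-\alpha n},
\end{align*}
where $C = \|f\|_\infty$ and I abbreviate $\phi^n = \phi^n(wx^{\mathfrak t(w)})$. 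Summing over $w \in W_n$ and $n$, the right-hand side is $t\,C^{t-1}|f|_\alpha \sum_n 2^{-\alpha n}\sum_{w\in W_n}\bigl(e^{-\phi^n}\bigr)^t$. The inner sum is dominated by $\sum_{w\in W_n}\bigl(e^{-\phi^n}\bigr)^{t}$ which, by the transfer-operator identity of Lemma \ref{residue}, is comparable to $e^{nP(-t\phi)} \le C' (e^{P(-t\phi)})^n$; combined with the geometric factor $2^{-\alpha n}$ the series converges for all $t$ near $1$ and, crucially, stays \emph{uniformly bounded} as $t \to 1+$ (because $e^{P(-t\phi)} \to 1$ while $2^{-\alpha n}$ supplies summability, with the $2^{-\alpha n}$ decay dominating). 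Hence $|\zeta_{f,y}(t) - \eta_f(t)|$ stays bounded as $t \to 1+$, so $(t-1)|\zeta_{f,y}(t) - \eta_f(t)| \to 0$, and likewise for $z$. Combined with Lemma \ref{residue} this yields the three equal limits.

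\textbf{Anticipated obstacle.} The delicate part is making the uniformity in $t$ precise near $t=1$: one must check that the bound on $\sum_{w\in W_n}\bigl(e^{-\phi^n(wx^{\mathfrak t(w)})}\bigr)^t$ coming from Corollary \ref{spectraldecomp} is locally uniform in $t$, i.e. that the implied constants $C'$ and the exponential rate can be chosen independent of $t$ in a right-neighbourhood of $1$ — this follows from the analyticity in $t$ of $e^{P(-t\phi)}$, $h_t$ and $\nu_t$ (the Corollary after Proposition \ref{rpf}) together with the fact that $e^{P(-t\phi)}<1$ for $t>1$, but it should be spelled out rather than asserted. A minor technical point is to handle $w$ with $f(wx^{\mathfrak t(w)})$ or $f(wy^{\mathfrak t(w)})$ possibly $0$: the mean value bound $|a^t-b^t|\le t(\max(a,b))^{t-1}|a-b|$ is still valid, so no separate argument is needed. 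Everything else is routine, and the identity of the exponential weights across $\zeta_{f,x},\zeta_{f,y},\zeta_{f,z}$ is what makes the comparison clean.
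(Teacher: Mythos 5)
There is a genuine gap. Early in your argument you write ``$\zeta_{f,x}=\eta_f$'', and your whole estimate is then carried out as a comparison between $\zeta_{f,y}(t)$ and $\zeta_{f,x}(t)$ (the quantities $a = f(wy^{\mathfrak t(w)})e^{-\phi^n}$ and $b=f(wx^{\mathfrak t(w)})e^{-\phi^n}$ you feed into the mean-value bound are exactly the terms of $\zeta_{f,y}$ and $\zeta_{f,x}$). But $\zeta_{f,x}$ and $\eta_f$ are \emph{not} the same function: the term of $\zeta_{f,x}(t)$ is
\begin{align*}
\bigl(f(wx^{\mathfrak t(w)})\,e^{-\phi^n(wx^{\mathfrak t(w)})}\bigr)^t = f(wx^{\mathfrak t(w)})^t\,e^{-t\phi^n(wx^{\mathfrak t(w)})},
\end{align*}
whereas the term of $\eta_f(t)$ is $f(wx^{\mathfrak t(w)})\,e^{-t\phi^n(wx^{\mathfrak t(w)})}$ — the $f$-factor is raised to the power $t$ in one but not the other. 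Your estimate therefore only shows $(t-1)\bigl(\zeta_{f,y}(t)-\zeta_{f,x}(t)\bigr)\to 0$; the passage from $\zeta_{f,x}$ to $\eta_f$ is simply missing, so the conclusion ``$\lim_{t\to 1+}(t-1)\zeta_{f,y}(t)=\lim_{t\to 1+}(t-1)\eta_f(t)$'' does not follow from what you proved.

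The missing step is easy to supply, and indeed is what the paper does: since $f>0$ is continuous on the compact set $\Lambda$, $\log f$ is bounded, and for $t\to 1+$ one has $f(wx^{\mathfrak t(w)})^t - f(wx^{\mathfrak t(w)}) = f(wx^{\mathfrak t(w)})\bigl((t-1)\log f(wx^{\mathfrak t(w)})+O((t-1)^2)\bigr)$, uniformly in $w$. Hence $|\zeta_{f,x}(t)-\eta_f(t)| = O\bigl((t-1)\,\eta_f(t)\bigr)$, and since $\eta_f(t)\sim L/(t-1)$ by Lemma \ref{residue}, we get $(t-1)(\zeta_{f,x}(t)-\eta_f(t))\to 0$. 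Once this is added, your argument closes. Incidentally, for the part you did prove (comparing $\zeta_{f,y}$ and $\zeta_{f,z}$ to $\zeta_{f,x}$), you used the H\"older modulus $|f|_\alpha 2^{-\alpha n}$, which gives a stronger quantitative estimate than the paper's; the paper uses only uniform continuity of $f$ (choose $N$ so that $|f(wx^{\mathfrak t(w)})-f(wy^{\mathfrak t(w)})|<\epsilon$ for all $w\in W_n$, $n\geq N$, then let $\epsilon\to 0$), which has the advantage of not invoking the local-uniformity-in-$t$ issues you flag as an obstacle. Either route works for that piece, but you must still bridge $\zeta_{f,x}$ and $\eta_f$.
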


\begin{proof}
First we shall show that 
it suffices to consider $\zeta_{f,x}(t)$.
Note that, for $t>1$,
\begin{align*}
|\zeta_{f,x}(t) - \zeta_{f,y}(t)| &\leq \sum_{n=1}^\infty \sum_{w \in W_n} 
\left|f(wx^{\mathfrak t(w)})^t
- f(wy^{\mathfrak t(w)})^t \right| e^{-t\phi^n(wx^{\mathfrak t(w)})} \\
&\leq t \|f\|_\infty^{t-1} \sum_{n=1}^\infty \sum_{w \in W_n} 
|f(wx^{\mathfrak t(w)})-f(wy^{\mathfrak t(w)})| e^{-t\phi^n(wx^{\mathfrak t(w)})}.
\end{align*}

Let $\{w_m\}_{m=1}^\infty$ be any enumeration of $W^*$.
Then 
\begin{align*}
\lim_{m \to +\infty} d((w_mx^{\mathfrak t(w)},w_my^{\mathfrak
  t(w)})=0,
\end{align*}
so that,
since $f$ is continuous,
\begin{align*}\lim_{m \to +\infty} f(w_mx^{\mathfrak t(w)}) 
-f(w_my^{\mathfrak t(w)})=0.
\end{align*}
Thus, since each set $W_n$ is finite, given $\epsilon >0$, 
there exists $N \geq 1$ such that if $w \in W_n$ and $n \geq N$ then
$|f(wx^{\mathfrak t(w)}) -f(wy^{\mathfrak t(w)})|<\epsilon$. Thus,
\begin{align*}
\left|\zeta_{f,x}(t) - \zeta_{f,y}(t)\right| &\leq 2t\|f\|_\infty^t (N-1)
+ \epsilon t \|f\|_\infty^{t-1}
\left(\sum_{n=N}^\infty \sum_{w \in W_n}e^{-t\phi^{n}(wx^{\mathfrak t(w)})}\right) \\
&\leq 2t\|f\|_\infty^t (N-1) + \epsilon t \|f\|_\infty^{t-1} \eta_1(t).
\end{align*}
Hence, $\zeta_{f,y}(t)$ converges provided $\zeta_{f,x}(t)$ converges
and we have the estimate
\begin{align*}\lim_{t \to 1+} (t-1)(\zeta_{f,x}(t)-\zeta_{f,y}(t))
&\leq \epsilon \lim_{t \to 1+} (t-1) \eta_1(t) \\
&= \epsilon \left(\frac{1}{\int  \phi \, d\mu}\right) 
\left(\sum_{j=1}^k
(L_{-\phi}\chi_j)(x^{j})\right).
\end{align*}
Since $\epsilon >0$ is arbitrary, this shows that
$\lim_{t \to 1+} (t-1)\zeta_{f,x}(t) 
= \lim_{t \to 1+} (t-1)\zeta_{f,y}(t)$.
A similar argument for $\zeta_{f,z}(t)$ completes the proof of the claim.

To complete the proof, we notice that, as $t \to 1+$, we have
\begin{align*}
f(wx^{\mathfrak t(w)})^t - f(wx^{\mathfrak t(w)}) &= f(wx^{\mathfrak
  t(w)})
\left(f(wx^{\mathfrak t(w)})^{t-1} -1\right) \\
&= f(wx^{\mathfrak t(w)}) (t-1 + O((t-1)^2)).
\end{align*}
Thus
\begin{align*}
\zeta_{f,x}(t) - \eta_f(t)
= (t-1 + O((t-1)^2)) \eta_f(t),
\end{align*}
so that $\zeta_{f,x}(t)$ converges for $t>1$ and 
\begin{align*}
\lim_{t \to 1+} (t-1)\zeta_{f,x}(t) = \lim_{t \to 1+} (t-1)\eta_f(t),
\end{align*}
as required.
\end{proof}

\begin{proof}[Proof of Theorem \ref{noncommmeas}]
We need to show that, whenever $f \in C(\Lambda,\mathbb R)$ with $f
\geq 0$, we have
\begin{align*}
\lim_{n \to +\infty} \frac{1}{\log n} \sum_{k=1}^n a_k(f) = c_\phi
\int f\,
d\mu,
\end{align*}
where $\{a_k(f)\}_{k=1}^\infty$ are the eigenvalues of $\pi(f)
|D_\phi|^{-1}$,
counted with multiplicity and written in decreasing order,
and where
\begin{align*}
c_\phi = 
\frac{2}{\int \phi \, d\mu} \sum_{j=1}^k \sum_{Tx =x^j}
e^{-\phi(x)}
\chi_j(x)= \frac{2}{\int \phi \, d\mu}\sum_{j=1}^k (L_{-\phi} \chi_j)(x^j).
\end{align*}

First, suppose that  $f \in C^\alpha(\Lambda,\mathbb R)$ and that
$f \geq 0$.
Lemma \ref{residue} and Lemma \ref{relatezetaeta} show that
$\zeta_f(t)$ converges for $t>1$ and diverges for $t=1$.
Thus $\pi(f) |D|^{-1} \in \mathcal L^{1,\infty}(H)$.
It follows immediately from Lemma \ref{hardylittlewood} 
and Lemma \ref{relatezetaeta} that 
\begin{align*} 
\lim_{n \to +\infty} \frac{1}{\log n} \sum_{k=1}^n a_k(f)
=\lim_{t \to 1+} (t-1) \zeta_f(t)
= c_\phi \int f \, d\mu.
\end{align*}

Now suppose $f \in C(\Lambda,\mathbb R)$ and $f \geq 0$. Given
$\epsilon>0$,
 we may choose
$g_1,g_2 \in  C^\alpha(\Lambda,\mathbb C)$ such that 
$
0 \leq g_1 \leq f \leq g_2$ and
\begin{align*}
\int f \, d\mu - \epsilon \leq \int g_1 \, d\mu \leq
\int g_2 \, d\mu \leq \int f \, d\mu + \epsilon.
\end{align*}
Then
we have
\begin{align*}
c_\phi \left(\int f \, d\mu - \epsilon\right) 
&\leq c_\phi \int g_1 \, d\mu 
= \lim_{t \to 1+} (t-1)\zeta_{g_1}(t) \\
&\leq \liminf_{t \to 1+} (t-1) \zeta_f(t) 
\leq \limsup_{t \to 1+} (t-1) \zeta_f(t) \\
&\leq \lim_{t \to 1+} (t-1)\zeta_{g_2}(t) 
= c_\phi \int g_2 \, d\mu \\
&\leq c_\phi \left(\int f \, d\mu - \epsilon\right).
\end{align*}
Since $\epsilon>0$ is arbitrary, the required convergence
result holds for $f$.
\end{proof}

\end{document}